\documentclass[12pt]{amsart}
\usepackage[american]{babel}
\usepackage{dsfont,mathtools,amssymb}
\usepackage{color}

\mathtoolsset{showonlyrefs,showmanualtags}

\theoremstyle{definition}
\newtheorem{theorem}{Theorem}[section]
\newtheorem{proposition}[theorem]{Proposition}
\newtheorem{lemma}[theorem]{Lemma}

\newtheorem{remark}[theorem]{Remark}
\newtheorem{definition}[theorem]{Definition}

\newtheorem{notation}[theorem]{Notation}
\newtheorem*{acknowledgement}{Acknowledgement}
\numberwithin{equation}{section}
\numberwithin{figure}{section}

\newcommand{\dx}{\mathrm{d}}
\newcommand{\e}{\mathrm{e}}
\newcommand{\E}{\mathds{E}}

\newcommand{\eps}{\varepsilon}

\renewcommand{\rho}{\varrho}
\renewcommand{\phi}{\varphi}

\newcommand{\h}{\frac{1}{2}}

\DeclareMathOperator{\var}{var}
\DeclareMathOperator{\cov}{cov}

\DeclareMathOperator{\dist}{dist}
\DeclareMathOperator{\osc}{osc}
\DeclareMathOperator{\Id}{Id}

\DeclareMathOperator{\supp}{supp}
\DeclareMathOperator{\PI}{PI}

\usepackage{hyperref}
\hypersetup{pdftex}



\title[Otto-Reznikoff revisited]{The approach of Otto-Reznikoff revisited}

\date{February 10, 2014}

\subjclass[2000]{Primary 60K35; secondary 82B20; 82C26.}

\keywords {lattice systems, continuous spin, logarithmic Sobolev inequality, decay of correlations}

\author{Georg Menz }
\address{Georg Menz\\ Stanford University}
\email{gmenz@stanford.edu}

\begin{document}
\begin{abstract}
In this article we consider a lattice system of unbounded continuous spins. Otto \& Reznikoff used the two-scale approach to show that exponential decay of correlations yields a logarithmic Sobolev inequality (LSI) with uniform constant in the system size. We improve their statement by weakening the assumptions. For the proof a more detailed analysis based on two new ingredients is needed. The two new ingredients are a covariance estimate and a uniform moment estimate. We additionally provide a comparison principle for covariances showing that the correlations for the conditioned Gibbs measures are controlled by the correlations of the original Gibbs measure with ferromagnetic interaction. The latter simplifies the application of the main result. As an application, we show how decay of correlations combined with the uniform LSI yields the uniqueness of the infinite-volume Gibbs measure, generalizing a result of Yoshida from finite-range to infinite-range interaction.

\end{abstract}

\maketitle

\section{Introduction and main results} 

We consider a lattice system of unbounded and continuous spins on the $d$-dimensional lattice $\mathds{Z}^d$. The formal Hamiltonian $H: \mathds{R}^{\mathds{Z}^d} \to \mathds{R}$ of the system is given by  
\begin{equation}
  \label{e_d_Hamiltonian}
	H(x) = \sum_{i \in \mathds{Z}^d } \psi_i(x_i) + \frac{1}{2} \sum_{i,j \in \mathds{Z}^d} M_{ij} x_i x_j.  
\end{equation}
We assume that the single-site potentials $\psi_i : \mathds{R} \to \mathds{R}$ are smooth and perturbed convex. This means that there is a splitting $\psi_i= \psi_i^c + \psi_i^b$ such that for all $i \in \mathds{Z}^d$ and $z \in \mathds{R}$
\begin{equation}\label{e_cond_psi}
  (\psi_i^c)'' (z) \geq 0 \qquad \mbox{and} \qquad |\psi_i^b (z)| + | ( \psi_i^b)' (z) | \lesssim 1.
\end{equation}
Here, we used the convention (see Definition~\ref{def:dep} below for more details)
\begin{equation*}
  a \lesssim b \qquad :\Leftrightarrow \mbox{there is a uniform constant $C>0$ such that $a \leq C b$}.
 \end{equation*}
Moreover, we assume that 
\begin{itemize}
\item  the interaction is symmetric i.e.~
  \begin{equation}
    \label{e_ass_sym}
    M_{ij}=M_{ji} \qquad \mbox{ for all $i, j \in \mathds{Z}^d$,}
  \end{equation}
  
\item and the matrix $M= (M_{ij})$ is strictly diagonal dominant i.e.~for some $\delta > 0$ it holds for any $i \in \mathds{Z}^d$
\begin{equation}\label{e_strictly_diag_dominant}
  \sum_{j \in \mathds{Z}^d, j \neq i} |M_{ij}| + \delta \le M_{ii}. 
\end{equation}
\end{itemize}

\begin{notation}
Let $S\subset \mathds{Z}^d$ be an arbitrary subset of $\mathds{Z}^d$. For convenience, we write $x^S$ as a shorthand for $(x_i)_{i \in S}$. 
\end{notation}
\begin{definition}[Tempered spin-values]
Given a finite subset $\Lambda\subset \mathds{Z}^d$, we call the spin values $x^{\mathds{Z}^d \backslash \Lambda}$ tempered, if for all $i \in \Lambda$
\begin{equation*}
  \sum_{j \in \mathds{Z}^d \backslash {\Lambda}} |M_{ij}| \ |x_j| < \infty.
\end{equation*} 
\end{definition}
\begin{definition}[Finite-volume Gibbs measure]
  Let $\Lambda$ be a finite subset of the lattice $\mathds{Z}^d$ and let $x^{\mathds{Z}^d \backslash \Lambda}$ be a tempered state. We call the measure $\mu_{\Lambda}( dx^{\Lambda})$ finite-volume Gibbs measure associated to the Hamiltonian $H$ with boundary values $x^{\mathds{Z}^d \backslash \Lambda}$, if it is a probability measure on the space $\mathds{R}^{\Lambda}$ given by the density
\begin{equation}
  \label{e_d_Gibbs_measure}
	\mu_{\Lambda}(dx^\Lambda) = \frac{1}{Z_{\mu_\Lambda}} \e^{-H(x^\Lambda,x^{\mathds{Z}^d \backslash \Lambda} )} \dx x^\Lambda .
\end{equation} 
Here, $Z_{\mu_\Lambda}$ denotes the normalization constant that turns $\mu_{\Lambda}$ into a probability measure. If there is no ambiguity, we also may write $Z$ to denote the normalization constant of a probability measure. We also used the short notation
\begin{align*}
  H(x^\Lambda,x^{\mathds{Z}^d \backslash \Lambda} ) = H(x) \quad \mbox{with} \quad x= (x^\Lambda,x^{\mathds{Z}^d \backslash \Lambda}). 
\end{align*}
 Note that $\mu_\Lambda$ depends on the spin values $x^{\mathds{Z}^d \backslash \Lambda}$ outside of the set $\Lambda$.
\end{definition}

The main object of study in this article is the question if the finite-volume Gibbs measure $\mu_{\Lambda}$ satisfies a logarithmic Sobolev inequality~(LSI).
\begin{definition}[LSI] \label{d_LSI}
  Let $X$ be a Euclidean space. A Borel probability measure $\mu$ on $X$ satisfies the LSI with constant $\varrho>0$, if for all smooth functions $f \geq 0$ 
  \begin{equation}\label{e_definition_of_LSI}
  \int f \log f \ d \mu - \int f  d\mu  \log \left( \int f  d\mu \right) \leq \frac{1}{2 \varrho} \int \frac{|\nabla f|^2}{f} d\mu.  
  \end{equation}
Here, $\nabla$ denotes the gradient determined by the Euclidean structure of~$X$.
\end{definition}
The LSI yields by linearization the Poincar\'e inequality (PI) (see for example~\cite{L}).
\begin{definition}[PI]\label{d_SG}
   Let $X$ be a Euclidean space. A Borel probability measure $\mu$ on $X$ satisfies the PI with constant $\varrho>0$, if for all smooth functions~$f$
\[
   \var_{\mu} (f): = \int \left(f - \int f d \mu \right)^2 d \mu \leq \frac{1}{\varrho} \int |\nabla f|^2 d\mu.
\]   
Here, $\nabla$ denotes the gradient determined by the Euclidean structure of~$X$.
\end{definition}

The LSI was originally introduced by Gross \cite{Gross}. It can be used as a powerful tool for studying spin systems. The LSI implies exponential convergence to equilibrium of the naturally associated conservative diffusion process. The rate of convergence is given by the LSI constant $\varrho$ (cf.~\cite[Chapter~3.2]{Roy07}). At least in the case of finite-range interaction, independence from the system size of the LSI constant of the local Gibbs state directly yields the uniqueness of the infinite-volume Gibbs state (cf.~\cite{Roy07,Yos_2,Zitt}). \medskip

In the literature, there are several results known that connect the decay of spin-spin correlations to the validity of a LSI uniform in the system size \cite{StrZeg,StrZeg2,Zeg96,Yos99,Yos01,B-H1}. This means that a static property of the equilibrium state of the system is connected to a dynamic property namely the relaxation to the equilibrium. We refer the reader to the Section~2.2.~of the article of Otto \& Reznikoff~\cite{OR07}, which gives a nice overview and discussion on the results in the literature. Otto \& Reznikoff used the two-scale criterion for the LSI (cf.~\cite[Theorem~1]{OR07} or \cite[Theorem 3]{GORV}) to deduce the following statement:\medskip
\begin{theorem}[\mbox{\cite[Theorem 3]{OR07}}]\label{p_OR_original}
Consider the formal Hamiltonian $H:\mathds{R}^{\mathds{Z}^d} \to \mathds{R} $ given by~\eqref{e_d_Hamiltonian}. Assume that the single site potentials $\psi_i=\psi$ are given by a function of the form
\begin{align}
  \label{e_single_site_potential_otto}
  \psi(z) = \frac{1}{12} z^4 + \psi^b (z) \qquad \mbox{with} \quad | \frac{d^2}{dz^2} \psi^b (z)|\leq C.
\end{align}
Assume that the interaction is symmetric i.e. $M_{ij}= M_{ji}$ and has zero diagonal i.e. $M_{ii}=0$. Consider a subset~$\Lambda_{\mathrm{tot}} \subset \mathbb{Z}^d$. We assume the uniform control:
\begin{align}
  \label{e_decay_inter_Otto}
  |M_{ij}| \lesssim  \exp \left( - \frac{|i-j|}{C} \right)
  \end{align}
for $i,j \in \Lambda$ and
\begin{align}
  \label{e_decay_corr_Otto}
|  \cov_{\mu_{\Lambda}} (x_i,x_j)| \lesssim \exp \left( - \frac{|i-j|}{C} \right)
\end{align}
 uniformly in $\Lambda \subset \Lambda_{\mathrm{tot}}$ and $i,j \in \Lambda$. Here, $\mu_{\Lambda}$ denotes the finite-volume Gibbs measures $\mu_{\Lambda}$ given by~\eqref{e_d_Gibbs_measure}. \newline
Then the finite-volume Gibbs measure $\mu_{\Lambda_{\mathrm{tot}}}$ satisfies the LSI with constant $\varrho>0$ depending only on the constant $C>0$ in~\eqref{e_single_site_potential_otto}, \eqref{e_decay_inter_Otto}, and~\eqref{e_decay_corr_Otto}.
\end{theorem}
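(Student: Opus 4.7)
The plan is to apply the two-scale criterion for the logarithmic Sobolev inequality of Otto \& Reznikoff with a block decomposition. Partition $\Lambda_{\mathrm{tot}}$ into disjoint cubes $\{B_k\}$ of side length $L$ to be chosen later, and decompose $\mu_{\Lambda_{\mathrm{tot}}}$ as the iterated conditioning over blocks. The two-scale criterion should then yield
\begin{equation*}
  \varrho \;\gtrsim\; (1-\kappa)\, \varrho_0,
\end{equation*}
provided each conditional block measure $\mu(\dx x^{B_k} \mid x^{\Lambda_{\mathrm{tot}}\setminus B_k})$ satisfies an LSI with constant $\varrho_0>0$ uniform in the tempered boundary, and the block-to-block coupling
\begin{equation*}
  \kappa \;\lesssim\; \Bigl\|\Bigl(\sum_{i \in B_k,\; j \in B_l} |M_{ij}|\cdot |\cov(x_i,x_j)|\Bigr)_{k,l}\Bigr\|
\end{equation*}
is strictly less than one.

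\textbf{Step 1 (uniform LSI on each block).} For fixed tempered boundary, the block Hamiltonian consists of $L^d$ single-site quartics $\frac{1}{12}z^4+\psi^b$, the intra-block quadratic form $\frac{1}{2}\sum_{i,j\in B_k}M_{ij}x_ix_j$, and a linear boundary term arising from $\sum_{i\in B_k,\,j\notin B_k}M_{ij}x_ix_j$. I would first check the LSI for the product of single-site quartic measures, which holds with a constant depending only on $C$; then use Holley--Stroock to absorb the bounded perturbation $\psi^b$; and handle the intra-block quadratic interaction and the boundary-induced shift of the mean through a Bakry--\'Emery-type perturbation combined with moment control. The expected output is $\varrho_0>0$ independent of the boundary values.

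\textbf{Step 2 (coupling estimate and choice of $L$).} Combining the decay assumptions \eqref{e_decay_inter_Otto} and \eqref{e_decay_corr_Otto},
\begin{equation*}
  \sum_{i \in B_k,\, j \in B_l} |M_{ij}|\cdot |\cov(x_i,x_j)| \;\lesssim\; |B_k|\,|B_l|\, \exp\bigl(-\dist(B_k,B_l)/C'\bigr)
\end{equation*}
for some $C'>0$, and summing over $l$ produces a convergent geometric series bounded by a constant times $L^{2d}\exp(-L/C')$, which tends to zero as $L\to\infty$. Hence $\kappa<1$ for $L$ sufficiently large, and combined with Step~1 this closes the two-scale argument and yields the desired uniform constant $\varrho$.

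I expect Step~1 to be the principal obstacle. Because the quartic single-site satisfies only $(\psi^c)''\geq 0$ without a uniform lower bound, Bakry--\'Emery cannot be invoked directly and an $L$-independent, boundary-independent $\varrho_0$ is delicate. One would need a uniform moment estimate on $\mu(\dx x^{B_k}\mid x^{\Lambda_{\mathrm{tot}}\setminus B_k})$ to localize the mass to the region where the confining part of $\psi$ is strongly convex, together with a careful perturbation argument so that $\varrho_0$ does not degrade. A second, subtler issue is that the covariances appearing in $\kappa$ are with respect to a further conditioned Gibbs measure rather than $\mu_\Lambda$ itself, so the hypothesis \eqref{e_decay_corr_Otto} must be transferred through a covariance comparison estimate before it can be used; these are precisely the two ingredients, a uniform moment estimate and a covariance estimate, that the abstract singles out as new.
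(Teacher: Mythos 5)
Your scheme has a genuine gap, and it sits exactly where you located the work: in the two-scale step itself, not only in Step~1. The criterion that is actually available (the Otto--Reznikoff matrix criterion, Theorem~\ref{p_otto_reznikoff}, resp.\ \cite[Theorem~1]{OR07}) requires a uniform LSI for the conditional measures of the components \emph{and} bounds $\kappa_{kl}$ on the mixed Hessian of the Hamiltonian of the measure being decomposed. If you decompose the original Gibbs measure into disjoint cubes $B_k$ of side $L$, that mixed Hessian between two blocks is simply the block $(M_{ij})_{i\in B_k,\,j\in B_l}$ of the interaction matrix: no covariance appears in it, so the hypothesis \eqref{e_decay_corr_Otto} never gets a chance to act, and for \emph{adjacent} cubes its size is of order one no matter how large $L$ is. Your own coupling estimate breaks down at the same place: for neighbouring blocks $\dist(B_k,B_l)=O(1)$, so the bound $|B_k|\,|B_l|\exp(-\dist(B_k,B_l)/C')$ is of order $L^{2d}$ (a more careful summation of $\exp(-|i-j|/C)$ over pairs near the common face still gives order $L^{d-1}$), and $\kappa$ does \emph{not} tend to zero as $L\to\infty$. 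At the same time the block constant $\varrho_0$ obtained by Holley--Stroock over $L^{d}$ sites degenerates (exponentially) in the block volume, so the positivity condition of the matrix criterion cannot be met by choosing $L$ large. In short, a partition into cubes with the original Hamiltonian cannot see the correlation-decay hypothesis at all, which is the heart of the theorem.

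The argument of \cite{OR07} (which the present paper does not reprove; it quotes Theorem~\ref{p_OR_original} and proves the generalization Theorem~\ref{p_mr_OR} along the same lines) avoids both problems by coarse-graining onto a \emph{sparse sublattice} $\Lambda_K=K\mathbb{Z}^d\cap\Lambda_{\mathrm{tot}}$ rather than a partition into blocks. One integrates out the complementary spins and works with the coarse-grained Hamiltonian $\bar H$; its mixed derivatives are $-M_{ij}-\sum_{k,l}M_{ik}M_{jl}\cov(x_k,x_l)$, and this is precisely where the assumed decay of correlations enters (note that \eqref{e_decay_corr_Otto} is assumed for \emph{all} $\Lambda\subset\Lambda_{\mathrm{tot}}$, hence already for the conditioned systems, so no comparison principle is needed here, contrary to your closing remark). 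This yields effective couplings that decay in $|i-j|$ and become small relative to the single-site scale once the spacing $K$ is large (\cite[Lemma~2]{OR07}, here Lemma~\ref{p_aux_lemma_2_OR}); the \emph{single-site} conditional measures of $\bar H$ satisfy an LSI uniformly in the system and the conditioned spins (\cite[Lemma~3]{OR07}, here Lemma~\ref{p_crucial_lemma _1_OR}), with an order-one constant instead of one degrading in a block volume; then Theorem~\ref{p_otto_reznikoff} applied on the sublattice, together with a recursive two-scale decomposition, gives the uniform LSI. The uniform moment estimate and the covariance estimate highlighted in the abstract are needed only for the weakened single-site assumptions of Theorem~\ref{p_mr_OR}, not for the quartic case of the statement you were asked to prove.
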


The most important feature of Theorem~\ref{p_OR_original} is that the LSI constant $\varrho$ is independent of the system size $|\Lambda_{\mathrm{tot}}|$ and of the spin values~$x^{\mathds{Z}^d\backslash \Lambda_{\mathrm{tot}}}$ outside of~$\Lambda_{\mathrm{tot}}$. The advantage of Theorem~\ref{p_OR_original} over existing results connecting a decay of correlations to a uniform LSI is that it can deal with infinite-range interaction (cf.~\cite{StrZeg,StrZeg2,Zeg96,Yos99,Yos01,B-H1}). However, Theorem~\ref{p_OR_original}  calls for some technical improvements. The main result of this article is the following generalized version of Theorem~\ref{p_OR_original}:
\begin{theorem}[Generalization of \mbox{\cite[Theorem 3]{OR07}}]\label{p_mr_OR}
 Assume that the formal Hamiltonian $H:\mathds{R}^{\mathds{Z}^d} \to \mathds{R} $ given by~\eqref{e_d_Hamiltonian} satisfies the Assumptions~\eqref{e_cond_psi}~-~\eqref{e_strictly_diag_dominant}. Let $\Lambda_{\mathrm{tot}} \subset \mathds{Z}^d$ be an arbitrary, finite subset of the lattice $\mathds{Z}^d$.\newline 
Assume the following decay of interactions and correlations: For some $\alpha>0$ it holds
  \begin{equation}\label{e_cond_inter_alg_decay_OR}
  |M_{ij}| \lesssim \frac{1}{|i-j|^{d+ \alpha}}
  \end{equation}
 uniformly in $i,j \in \Lambda_{\mathrm{tot}}$ and
\begin{equation}\label{e_cond_alg_decay_OR}
 |  \cov_{\mu_{\Lambda}} (x_i,x_j)|  \lesssim \frac{1}{|i-j|^{d + \alpha}}
\end{equation}
 uniformly in $\Lambda \subset \Lambda_{\mathrm{tot}}$, and $i,j \in \Lambda$. Here, $\mu_{\Lambda}$ denote the finite-volume Gibbs measures given by (cf.~\eqref{e_d_Gibbs_measure}).\newline
Then the finite-volume Gibbs measure $\mu_{\Lambda_{\mathrm{tot}}}$ satisfies the LSI with a constant $\varrho>0$ depending only on the constant in~\eqref{e_cond_psi},~\eqref{e_strictly_diag_dominant}, \eqref{e_cond_alg_decay_OR} and \eqref{e_cond_inter_alg_decay_OR}.
\end{theorem}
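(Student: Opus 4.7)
The plan is to follow the two-scale strategy of Otto-Reznikoff. Partition $\Lambda_{\mathrm{tot}}$ into disjoint cubic blocks $\{B_k\}$ of side length $K$, to be chosen large, and let $P$ be the linear projection onto the vector of block means $\bar x = (\bar x^{B_k})_k$ with $\bar x^{B_k} = |B_k|^{-1}\sum_{i\in B_k} x_i$. Decomposing
\[
\mu_{\Lambda_{\mathrm{tot}}} \;=\; \bar\mu(d\bar x)\,\otimes\,\mu(d y \mid \bar x)
\]
into the push-forward under $P$ and the conditionals on the fluctuation subspace, the two-scale criterion (cf.~\cite[Theorem~1]{OR07}, \cite[Theorem~3]{GORV}) reduces the LSI for $\mu_{\Lambda_{\mathrm{tot}}}$ to three ingredients: a uniform LSI for the conditionals $\mu(\cdot\mid\bar x)$, an LSI for the marginal $\bar\mu$, and a covariance-type coupling estimate between the two scales.

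For the conditional LSI, split $H=H^c+V^b$ with $H^c=\sum_i \psi_i^c(x_i)+\tfrac12\sum_{ij}M_{ij}x_i x_j$ and $V^b=\sum_i \psi_i^b(x_i)$. Strict diagonal dominance~\eqref{e_strictly_diag_dominant} gives $M\geq \delta\,\Id$ as a quadratic form, so $\Hess H^c\geq \delta\,\Id$ on the whole fluctuation space. A perturbation argument exploiting the uniform bound on $|\psi_i^b|+|(\psi_i^b)'|$ in~\eqref{e_cond_psi} then transfers the LSI from the strictly convex reference measure to $\mu(\cdot\mid\bar x)$, uniformly in $\bar x$, in $K$, and in tempered boundary data. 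Here the uniform \emph{moment estimate} $\int |x_i|^p\,d\mu_\Lambda\lesssim 1$, derived from the diagonal dominance~\eqref{e_strictly_diag_dominant} together with~\eqref{e_cond_psi} and uniform in $\Lambda$ and the boundary, is essential to control the tails when the boundary contribution $\sum_{j\notin\Lambda}M_{ij}x_j$ is only algebraically integrable.

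For the marginal LSI, write $\bar\mu$ with density $\e^{-\bar H(\bar x)}$ where $\bar H(\bar x)=-\log\int \e^{-H(x)}\,\dx y$. The Brascamp--Lieb identity yields
\[
\Hess \bar H(\bar x) \;=\; \E_{\mu(\cdot\mid\bar x)}\!\bigl[\Hess_{\bar x}H\bigr] \;-\; \cov_{\mu(\cdot\mid\bar x)}\!\bigl(\nabla_{\bar x}H,\,\nabla_{\bar x}H\bigr).
\]
The first term is positive definite by strict diagonal dominance and the convexity of $\psi_i^c$. Controlling the covariance correction is the content of the new \emph{covariance estimate}: by the comparison principle announced in the abstract, $|\cov_{\mu(\cdot\mid\bar x)}(x_i,x_j)|$ is dominated by the covariance of the original Gibbs measure with the ferromagnetic interaction $|M_{ij}|$, to which~\eqref{e_cond_alg_decay_OR} applies, giving algebraic decay $|i-j|^{-(d+\alpha)}$. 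Combined with~\eqref{e_cond_inter_alg_decay_OR}, the sum $\sum_{j\neq i}|M_{ij}|\,|\cov_{\mu(\cdot\mid\bar x)}(x_i,x_j)|$ is summable uniformly in the system, and for $K$ sufficiently large the covariance correction is strictly dominated by the direct contribution. Bakry-\'Emery then delivers LSI for $\bar\mu$, and the same covariance bound controls the cross-term in the two-scale criterion.

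The main obstacle is the covariance comparison principle underlying the covariance estimate. Conditioning on the block means destroys the ferromagnetic structure of the original Hamiltonian and can in principle enhance correlations, so no direct monotone coupling is available. The natural route is a Helffer-Sj\"ostrand type representation of $\cov_{\mu(\cdot\mid\bar x)}(x_i,x_j)$ as an integrated semigroup of a degenerate diffusion on the fluctuation subspace, combined with an FKG- or Brascamp-Lieb-style monotonicity that dominates this diffusion by the one associated to the ferromagnetic Hamiltonian with interaction $|M_{ij}|$. Once this comparison, the uniform moment estimate, and the conditional LSI are in place, the algebraic tail $|i-j|^{-(d+\alpha)}$ plugs into the two-scale bookkeeping to yield a positive LSI constant depending only on the constants in~\eqref{e_cond_psi}--\eqref{e_cond_alg_decay_OR}.
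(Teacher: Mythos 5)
Your block-average (GORV-style) two-scale decomposition is a genuinely different route from the paper, which instead conditions recursively on sublattices $\Lambda_K=K\mathds{Z}^d\cap\Lambda_{\mathrm{tot}}$ and integrates out the complement; that difference would be acceptable if your steps closed, but two of them do not. First, the conditional LSI: you propose to transfer the LSI from the strictly convex reference Hamiltonian $H^c$ to $\mu(\cdot\mid\bar x)$ by "a perturbation argument exploiting the uniform bound on $|\psi_i^b|+|(\psi_i^b)'|$". The perturbation is $V^b=\sum_i\psi_i^b(x_i)$, whose oscillation on a block of side $K$ is of order $K^d$ (and on the whole fluctuation space of order $|\Lambda_{\mathrm{tot}}|$), so Holley--Stroock yields a constant degenerating like $\exp(-cK^d)$ -- and you must take $K$ large. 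This is precisely the obstruction the theorem is about, and it is why the paper never applies the perturbation principle globally: it only applies it to an effective \emph{single-site} Hamiltonian, after proving (Lemma~\ref{p_crucial_lemma _1_OR}) that integrating out all other spins leaves a perturbation bounded by $2(R+1)^dC$ with $R$ a \emph{fixed} constant. Establishing that decomposition is where the two new ingredients enter -- the covariance estimate of Proposition~\ref{p_algebraic_decay_correlations} and the uniform variance bound of Lemma~\ref{p_est_var_ss} (via Nittka's exponential moment estimate) -- neither of which appears in your conditional step, and the mention of a moment bound alone does not substitute for them.

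Second, the covariance bound for measures conditioned on block means, which you yourself identify as the crux, is left unproven, and the route you sketch through the comparison principles is not available: Lemma~\ref{p_comparison_covariances} and Lemma~\ref{p:attractive_interact_dominates} compare finite-volume Gibbs measures obtained by fixing spins outside a region, i.e.\ measures of the form~\eqref{e_d_Gibbs_measure}; a measure conditioned on the linear constraints $\bar x^{B_k}=\mathrm{const}$ is not of this form. Moreover the inequality in Lemma~\ref{p:attractive_interact_dominates} points the wrong way for your use: the hypothesis~\eqref{e_cond_alg_decay_OR} bounds $\cov_{\mu_\Lambda}$, and the ferromagnetic covariance \emph{dominates} it, so the assumed decay does not transfer to the ferromagnetized system "to which~\eqref{e_cond_alg_decay_OR} applies" as you claim. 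Relatedly, convexity of the coarse-grained Hamiltonian on block means ("Bakry--\'Emery then delivers LSI for $\bar\mu$") is neither established by your Brascamp--Lieb identity (the covariance correction is exactly what is missing) nor what the paper proves: the paper instead shows algebraic decay of the mixed derivatives of the \emph{sublattice} coarse-grained Hamiltonian (Lemma~\ref{p_aux_lemma_2_OR}), where the assumed decay~\eqref{e_cond_alg_decay_OR} applies directly to the genuine finite-volume Gibbs measure on $\Lambda_{\mathrm{tot}}\setminus S$, and then concludes with the Otto--Reznikoff matrix criterion (Theorem~\ref{p_otto_reznikoff}) for $K$ large, feeding into the recursive scheme of \cite{OR07}. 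As it stands, the non-uniform perturbation step and the unproven covariance estimate for constrained conditionals are genuine gaps, and the Helffer--Sj\"ostrand/FKG comparison you propose to fill the second one is conjectural.
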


Theorem~\ref{p_mr_OR} improves Theorem~\ref{p_OR_original} in two ways:\newline
Note that Theorem~\ref{p_OR_original} needs an exponential decay of interaction and spin-spin correlations. However, analyzing the proof of~\cite[Theorem~3]{OR07} one sees that the exponential decay is only needed to guarantee that certain sums are summable. Therefore this assumption can be weakened to algebraically decaying interaction and spin-spin correlations. Of course now, the order of the algebraic decay depends on the dimension of the underlying lattice to guarantee summability. \smallskip

The second improvement is more subtle. Theorem~\ref{p_OR_original} needs a special structure on the single-site potentials $\psi_i$. Namely, the single-site potentials $\psi_i$ have to be perturbed quartic in the sense of~\eqref{e_single_site_potential_otto}. Analyzing the proof of~\cite[Theorem~3]{OR07} shows that the argument does not rely on a quartic potential~$\psi_i^c$. For the argument of Otto \& Reznikoff it would be sufficient to have a perturbation of a strictly-superquadratic potential~i.e. 
\begin{equation}
  \label{e_super_quadratic}
  \liminf_{|x|\to \infty} \frac{d^2}{dx^2} \psi_i^c (x) \to \infty.
\end{equation}
The condition~\eqref{e_super_quadratic} on the single-site potential $\psi_i$ is widespread and accepted in the literature on the uniform LSI (cf.~for example~\cite{Yos01,Yos_2,ProSco}). \smallskip

However, a result by Zegarlinski~\cite[Theorem~4.1.]{Zeg96} indicates that the condition~\eqref{e_super_quadratic} is not necessary for deducing a uniform LSI. Zegarlinski deduced in~\cite[Theorem~4.1.]{Zeg96} the uniform LSI for the finite-volume Gibbs measure $\mu_\Lambda$ given by~\eqref{e_d_Gibbs_measure} on an one-dimensional lattice $\Lambda_{\mathrm{tot}} \subset \mathds{Z}$ with finite-range interaction.  For Zegarlinski's argument it is sufficient that the single-site potentials $\psi_i$ satisfy the conditions~\eqref{e_cond_psi} and~\eqref{e_strictly_diag_dominant}, which is strictly weaker than the condition~\eqref{e_super_quadratic} (for a proof of this statement we refer the reader to \cite[Proof of Lemma~1]{OR07}). In Theorem~\ref{p_mr_OR} we show that the conditions~\eqref{e_cond_psi} and~\eqref{e_strictly_diag_dominant} are in fact also sufficient for the Otto-Reznikoff approach. 

\begin{remark}\label{r_linear_term}
 Note that the structural assumptions~\eqref{e_cond_psi}~-~\eqref{e_strictly_diag_dominant} on the Hamiltonian $H$ are invariant under adding a linear term like 
  \begin{equation*}
    \sum_{i \in \mathds{Z}} x_ib_i
  \end{equation*}
for arbitrary $b_i \in \mathds{R}$. Therefore the the LSI constant of Theorem~\ref{p_mr_OR} is invariant under adding a linear term to the Hamiltonian. Such a linear term can be interpreted as a field acting on the system. If the coefficients $b_i$ are chosen randomly, one calls the linear term random field.
\end{remark}

Let us discuss what are the ingredients to weaken the structural assumptions on the single-site potential from the condition~\eqref{e_single_site_potential_otto} to the condition~\eqref{e_cond_psi} and~\eqref{e_strictly_diag_dominant}. Analyzing the proof of Otto \& Reznikoff, it all boils down to understanding the structure of the Hamiltonian of the marginals of the finite-volume Gibbs measure conditioned on the spin values of some set $S \subset \Lambda_{\mathrm{tot}}$ (cf.~\cite[Lemma 2, Lemma 3 and Lemma 4]{OR07} or see Section~\ref{s_LSI}). Because our structural assumptions ~\eqref{e_cond_psi} and~\eqref{e_strictly_diag_dominant} on the single-site potentials are weaker, our proof needs new ingredients and more detailed arguments compared to~\cite{OR07}. \medskip

The first new ingredient in the proof of Theorem~\ref{p_mr_OR} is the covariance estimate of Proposition~\ref{p_algebraic_decay_correlations}. With this estimate it is possible to deduce algebraic decay of correlations, provided the interactions $M_{ij}$ also decay algebraically and the nonconvex perturbation~$\psi_i^b$ is small enough.\medskip

The second new ingredient in the proof of Theorem~\ref{p_mr_OR} is a uniform estimate of $\var_{\mu_{\Lambda}} (x_i)$ (see Lemma~\ref{p_est_var_ss}), which we reduce to a moment estimate due to Robin Nittka (cf.~\cite[Lemma 4.2]{MN} and Lemma~\ref{lem:moments}). The full proof of Theorem~\ref{p_mr_OR} is given in Section~\ref{s_LSI}.\medskip

However, Theorem~\ref{p_mr_OR} still calls for further improvements. Note that in the condition~\eqref{e_cond_alg_decay_OR} of Theorem~\ref{p_mr_OR} one needs to check the decay of correlations for all finite-volume Gibbs measures $\mu_\Lambda$ with $\Lambda \subset \Lambda_{\mathrm{tot}}$. Even if this is a very common assumption (see for example \cite[Condition (DS3)]{Yos01}) it may be a bit tedious to verify. Instead of the strong condition~\eqref{e_cond_alg_decay_OR}, one would like to have a weak condition like the one used for discrete spins in~\cite{MaOl}. The main difference between the weak and the strong condition for the decay of correlations is that in the weak condition it suffices to show that for a sufficiently large box $\Lambda$ the correlations decay nicely. The main advantage of the weak condition is that one does not have to control the decay of correlations for all growing subsets $\Lambda \to \mathds{Z}^d$. Therefore, the weak condition is easier to verify by experiments. Unfortunately, we cannot get rid of the strong decay of correlations condition~\eqref{e_cond_alg_decay_OR} in the Otto-Reznikoff approach. However, we show how verifying the strong decay of correlations condition~ \eqref{e_cond_alg_decay_OR} can be simplified by two comparison principles.\smallskip

The first comparison principle (see Lemma~\ref{p_comparison_covariances} below) shows that in the case of ferromagnetic interaction (i.e.~$M_{ij}<0$ for all $i,j \in \Lambda_{\mathrm{tot}}$) the correlations of a smaller system are controlled by correlations of the larger system.

\begin{lemma}\label{p_comparison_covariances}
Assume that the formal Hamiltonian $H:\mathds{R}^{\mathds{Z}^d} \to \mathds{R} $ given by~\eqref{e_d_Hamiltonian} satisfies the Assumptions~\eqref{e_cond_psi}~-~\eqref{e_strictly_diag_dominant}.  Additionally, assume that the interactions are ferromagnetic i.e.~$M_{i,j} \leq 0$ for $i \neq j$. \newline 
 For arbitrary subsets $\Lambda \subset \Lambda_{\mathrm{tot}} \subset \mathds{Z}^d$, we consider the finite-volume Gibbs measure $\mu_{\Lambda}$ and $\mu_{\Lambda_{\mathrm{tot}}}$ with the same tempered state $x^{\mathds{Z}^d \backslash \Lambda_{\mathrm{tot}}}$. Then it holds for any $i,j \in \Lambda$
 \begin{align} \label{e_comp_small_big_corr}
 \cov_{\mu_{\Lambda}} (x_i,x_j) \leq \cov_{\mu_{\Lambda_{\mathrm{tot}}}}(x_i,x_j) .
 \end{align}
\end{lemma}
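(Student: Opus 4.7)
I plan to prove the inequality by identifying $\mu_\Lambda$ with a regular conditional distribution of $\mu_{\Lambda_{\mathrm{tot}}}$ and then applying the FKG inequality, available for ferromagnetic Gibbs measures. Set $S := \Lambda_{\mathrm{tot}} \setminus \Lambda$. By the Gibbs property the conditional distribution of $\mu_{\Lambda_{\mathrm{tot}}}$ given $x^S$ is precisely $\mu_\Lambda$, with its boundary extended by the values on $S$. Hence the law of total covariance gives
\begin{equation*}
\cov_{\mu_{\Lambda_{\mathrm{tot}}}}(x_i, x_j) = \E_{\mu_{\Lambda_{\mathrm{tot}}}}\!\bigl[\cov_{\mu_\Lambda(\cdot \mid x^S)}(x_i, x_j)\bigr] + \cov_{\mu_{\Lambda_{\mathrm{tot}}}}\!\bigl(m_i(x^S), m_j(x^S)\bigr),
\end{equation*}
where $m_k(x^S) := \E_{\mu_\Lambda(\cdot \mid x^S)}[x_k]$. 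The task reduces to showing that the second term on the right-hand side is non-negative.

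First I would verify the FKG lattice condition $H(x \vee y) + H(x \wedge y) \leq H(x) + H(y)$ for the Hamiltonian~\eqref{e_d_Hamiltonian}: the single-site pieces contribute identically on both sides, while the quadratic part contributes to the difference a sum over $i \neq j$ of terms $M_{ij}(x_i - y_i)(x_j - y_j)\,\setone_{\{(x_i - y_i)(x_j - y_j) < 0\}}$, which is non-negative exactly under the ferromagnetic hypothesis $M_{ij} \leq 0$. Consequently both $\mu_{\Lambda_{\mathrm{tot}}}$ and its marginal on $\mathds{R}^S$ are FKG. Next I would show that each $m_k$ ($k \in \Lambda$) is coordinatewise non-decreasing in $x^S$: shifting a boundary coordinate $x_j$ (for $j \in S$) upward by $\delta > 0$ multiplies the conditional density of $\mu_\Lambda(\cdot \mid x^S)$ by $\exp\bigl(\delta \sum_{i \in \Lambda} |M_{ij}|\,x_i\bigr)$, an increasing function of $x^\Lambda$. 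Holley's monotone coupling theorem then yields the stochastic ordering $\mu_\Lambda(\cdot \mid x^S) \preceq_{\mathrm{st}} \mu_\Lambda(\cdot \mid x^S + \delta\,e_j)$, whence $m_k$ is monotone. Applying the FKG inequality on the $\mathds{R}^S$-marginal to the two increasing functions $m_i$ and $m_j$ gives $\cov_{\mu_{\Lambda_{\mathrm{tot}}}}(m_i, m_j) \geq 0$, which combined with the total-covariance identity yields the claim.

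The main obstacle is executing FKG and Holley rigorously for unbounded continuous spins with only tempered boundary configurations, since the classical statements of these theorems assume compactness of the state space. A standard workaround is to truncate each spin to $[-R, R]$, establish the comparison on the resulting compact product lattice, and then pass to the limit $R \to \infty$ using the uniform moment bounds of Lemma~\ref{p_est_var_ss} and Lemma~\ref{lem:moments}; temperedness of the exterior data $x^{\mathds{Z}^d \setminus \Lambda_{\mathrm{tot}}}$ keeps the boundary contribution under control uniformly in the truncation. A further subtlety is that this scheme yields~\eqref{e_comp_small_big_corr} only averaged over the boundary $x^S$ under the marginal of $\mu_{\Lambda_{\mathrm{tot}}}$; to recover the pointwise statement one may either restrict the later applications to the averaged form (which is already sufficient in verifying the decay condition~\eqref{e_cond_alg_decay_OR}) or supplement the argument with a Schur-complement comparison for the $M$-matrix structured Hessian $\Hess H$, in combination with the Helffer--Sj\"ostrand representation of $\cov_{\mu_\Lambda}(x_i, x_j)$ at fixed boundary on $S$.
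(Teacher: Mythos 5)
Your FKG argument is sound as far as it goes, but it does not prove the lemma as stated: the law of total covariance plus positivity of $\cov_{\mu_{\Lambda_{\mathrm{tot}}}}\bigl(m_i(x^S),m_j(x^S)\bigr)$ only yields the \emph{boundary-averaged} inequality
\begin{equation*}
\int \cov_{\mu_{\Lambda}(\cdot\,|\,x^S)}(x_i,x_j)\,\bar\mu(dx^S)\;\leq\;\cov_{\mu_{\Lambda_{\mathrm{tot}}}}(x_i,x_j),
\end{equation*}
whereas~\eqref{e_comp_small_big_corr} is a pointwise statement: $\mu_\Lambda$ is the finite-volume Gibbs measure for an \emph{arbitrary fixed} tempered configuration on $\mathds{Z}^d\backslash\Lambda$, in particular for arbitrary values $m^{\Lambda_{\mathrm{tot}}\backslash\Lambda}$, not for a typical boundary drawn from the marginal of $\mu_{\Lambda_{\mathrm{tot}}}$. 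You flag this yourself, but neither proposed remedy closes the gap. Restricting later applications to the averaged form changes the statement rather than proving it, and it is not sufficient for the intended use: condition~\eqref{e_cond_alg_decay_OR} must hold for the conditional measures at arbitrary conditioned spin values (these are exactly the measures appearing in the recursive two-scale argument), so an estimate valid only on average over the boundary does not substitute. The second remedy (Schur complement of $\Hess H$ plus Helffer--Sj\"ostrand) is only a gesture: for a Gaussian Hamiltonian the covariance is an inverse Hessian and the Schur-complement monotonicity is exactly the claim, but for perturbed convex single-site potentials there is no such identification, and monotonicity of the relevant resolvent under conditioning on $S$ is precisely what would have to be proved.

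The paper proves the pointwise statement by a different mechanism. It fixes $m$ on $\Lambda_{\mathrm{tot}}\backslash\Lambda$, interpolates with $H_\alpha(x)=H(x)+\alpha\sum_{l\in\Lambda_{\mathrm{tot}}\backslash\Lambda}(x_l-m_l)^2$ (so $\mu_0=\mu_{\Lambda_{\mathrm{tot}}}$ and $\mu_\alpha\to\mu_\Lambda$ as $\alpha\to\infty$), and computes
\begin{equation*}
\frac{d}{d\alpha}\cov_{\mu_\alpha}(x_i,x_j)=-\cov_{\mu_\alpha}\Bigl(x_ix_j,\sum_{l}(x_l-m_l)^2\Bigr).
\end{equation*}
The required sign $\cov_{\mu_\alpha}\bigl(x_ix_j,\sum_l(x_l-m_l)^2\bigr)\geq 0$ is a Griffiths-type correlation inequality between two observables that are \emph{not} monotone, so FKG does not apply; it is obtained by Sylvester's device of doubling variables, exploiting the even/odd symmetry of the resulting single-site potentials, and expanding the ferromagnetic interaction term so that every surviving term in the expansion has a nonnegative coefficient. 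If you want to salvage your route, you would need a comparison that is valid for each fixed $m$ -- essentially reproducing such a Griffiths/Sylvester argument -- rather than an FKG statement under the marginal of the large system.
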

The proof of Lemma~\ref{p_comparison_covariances} is given in Section~\ref{s_covariance_estimate}. The second comparison principle is rather standard. It states that correlations of a non-ferromagnetic system are controlled by the correlations of the associated ferromagnetic system: 

\begin{lemma}\label{p:attractive_interact_dominates}
Assume that the formal Hamiltonian $H:\mathds{R}^{\mathds{Z}^d} \to \mathds{R} $ given by~\eqref{e_d_Hamiltonian} satisfies the Assumptions~\eqref{e_cond_psi}~-~\eqref{e_strictly_diag_dominant}.  
Let $\mu_{\Lambda}$ denote the finite-volume Gibbs measure given by~\eqref{e_d_Gibbs_measure}. Additionally, consider the corresponding finite-volume Gibbs measure~~$\mu_{\Lambda,|M|}$ with attractive interaction i.e.~the associated formal Hamiltonian is given by 
\begin{equation*}
	H(x) = \sum_{i \in \mathds{Z}^d } \psi_i(x_i) - \frac{1}{2} \sum_{i,j \in \mathds{Z}^d} |M_{ij}| x_i x_j.  
\end{equation*}
Then it holds that for any $i,j \in \Lambda$
\begin{equation}
  \label{eq:covariance_domination}
  | \cov_{\mu_{\Lambda}} (x_i,x_j)  | \leq \cov_{\mu_{\Lambda,|M|}} (x_i,x_j) .
\end{equation}
\end{lemma}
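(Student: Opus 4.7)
The plan is to combine a duplication trick with the classical Ginibre/Griffiths random-sign argument. Introduce an independent pair $(X,Y)\sim\mu_\Lambda\otimes\mu_\Lambda$, so that
\begin{equation*}
2\cov_{\mu_\Lambda}(x_i,x_j) \;=\; \E\bigl[(X_i-Y_i)(X_j-Y_j)\bigr].
\end{equation*}
The change of variables $U=(X+Y)/2$, $V=(X-Y)/2$ recasts the joint density as
\begin{equation*}
\pi_\Lambda(u,v)\;\propto\;\exp\!\Bigl(-\sum_i \Psi_i(u_i,v_i)-\sum_{k,l}M_{kl}(u_ku_l+v_kv_l)\Bigr),
\end{equation*}
where $\Psi_i(u,v):=\psi_i(u+v)+\psi_i(u-v)$ is manifestly \emph{even} in the $v$-argument. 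One has $\cov_{\mu_\Lambda}(x_i,x_j)=2\,\E_{\pi_\Lambda}[V_iV_j]$, and the analogous identity with $-|M|$ in place of $M$ holds for $\mu_{\Lambda,|M|}^{\otimes 2}$; so the task reduces to proving $|\E_{\pi_\Lambda}[V_iV_j]|\le \E_{\pi_{|M|}}[V_iV_j]$.

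Condition on $U=u$. The resulting law $\nu_u$ of $V$ has \emph{even} single-site potentials $\Psi_i(u_i,\cdot)$ together with the quadratic interaction $\sum_{k,l}M_{kl}V_kV_l$. Representing $V_k=\eta_k|V_k|$ with independent uniform signs $\eta_k\in\{\pm1\}$ (allowed precisely because of the evenness), expanding the interaction in a Taylor series and averaging over $\eta$, only terms in which every site appears with even total multiplicity in $\{i,j,k_1,l_1,\ldots,k_n,l_n\}$ survive; each such surviving term is a signed product $M_{k_1l_1}\cdots M_{k_nl_n}$ multiplied by a manifestly nonnegative polynomial in $|V|$. Replacing each $M_{kl}$ by $|M_{kl}|$ gives a termwise upper bound on the absolute value, which yields
\begin{equation*}
\bigl|\E_{\nu_u}[V_iV_j]\bigr|\;\le\;\E_{\nu_u^{|M|}}[V_iV_j]\;=\;\cov_{\nu_u^{|M|}}(v_i,v_j),
\end{equation*}
and the right-hand side is nonnegative by FKG applied to the (ferromagnetic) conditional measure $\nu_u^{|M|}$, which satisfies the lattice condition $-|M_{kl}|\le 0$ for $k\ne l$.

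The final step is to integrate this pointwise bound against the $U$-marginals $P_\Lambda$ and $P_{|M|}$ of $\pi_\Lambda$ and $\pi_{|M|}$, which differ because the $U$-interaction term is $-M(u,u)$ in the former and $+|M|(u,u)$ in the latter. Since $\Psi_i(u,v)$ is not symmetric in $u$ (the paper does not assume $\psi_i$ to be even), the random-sign trick cannot simply be iterated in the $U$-variable, and this is the principal obstacle. I would close the argument by interpolating along $M^t:=tM+(1-t)(-|M|)$, $t\in[0,1]$: at $t=0$ the two measures coincide so the inequality is trivial, and the derivative $\frac{d}{dt}\cov_{\mu^t}(x_i,x_j)$ can be expressed via four-point correlations that the conditional bound of the previous paragraph allows one to control by $\cov_{\mu^0}$-type quantities, yielding a Gronwall-style estimate $|\cov_{\mu^1}|\le\cov_{\mu^0}=\cov_{\mu_{\Lambda,|M|}}$.
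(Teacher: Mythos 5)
Your opening moves -- duplication, the change to sum/difference variables, the evenness of $\Psi_i$ in the difference variable, and the identity $\cov_{\mu_\Lambda}(x_i,x_j)=2\,\E_{\pi_\Lambda}[V_iV_j]$ -- are the right ingredients: the paper does not prove this lemma itself but points to \cite[Lemma 2.1]{MN}, which follows the Horiguchi--Morita expansion for discrete spins, and that route starts exactly like this. The trouble is in how you finish. First, the conditional bound $|\E_{\nu_u}[V_iV_j]|\le\E_{\nu_u^{|M|}}[V_iV_j]$ does not follow from the termwise argument you describe: the sign-symmetrization/Taylor expansion controls the \emph{unnormalized} integral (the numerator), but the normalization of $\nu_u$ also changes when $M$ is replaced by $-|M|$, and the same expansion gives $Z_M(u)\le Z_{|M|}(u)$, which is the wrong direction once it sits in the denominator. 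To obtain the conditional inequality one must never divide by an $M$-dependent partition function; instead one shows nonnegativity of the combination $Z_M(u)\,N_{|M|}(u)\pm Z_{|M|}(u)\,N_M(u)$ as a single integral over a further doubled set of difference variables -- the same Sylvester-type device the paper uses in its proof of Lemma~\ref{p_comparison_covariances}.

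Second -- and this is the gap you acknowledge yourself -- even a repaired conditional bound is integrated against the wrong measure: the $u$-marginals of $\pi_\Lambda$ and $\pi_{|M|}$ differ, and you provide no comparison between them. The proposed interpolation $M^t=tM+(1-t)(-|M|)$ with a ``Gronwall-style'' closure is not a proof: the $t$-derivative of the covariance produces truncated four-point correlations of the mixed-sign measure $\mu^t$, whose control is at least as hard as the original two-point statement, and a Gronwall estimate would in any case yield a multiplicative factor of the form $e^{C}$ rather than the sharp constant $1$ demanded by \eqref{eq:covariance_domination}; moreover the absolute value on the left forces you to control $\pm\cov_{\mu^t}$ simultaneously, so monotonicity along the interpolation is not even the right target. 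The standard way to close the argument is global rather than conditional: write $Z_{M}^2Z_{|M|}^2\bigl(\cov_{\mu_{\Lambda,|M|}}(x_i,x_j)\pm\cov_{\mu_{\Lambda}}(x_i,x_j)\bigr)$ as one integral over the two duplicated systems, pass to sum/difference coordinates in each, expand the off-diagonal interactions, and check that every surviving term is nonnegative by evenness. Since the partition functions are positive, this settles the sign of the difference and handles the normalization and the marginal mismatch in one stroke, in the spirit of \cite[Lemma 2.1]{MN} and of the paper's proof of Lemma~\ref{p_comparison_covariances}.
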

We do not state the proof of the last lemma. One can find the proof for example in a recent work by Robin Nittka and the author. The proof follows the argument of~\cite{HorMor} for discrete spins (see~\cite[Lemma 2.1.]{MN}).

\begin{remark}
Usually, one considers finite-volume Gibbs measures for some inverse temperature $\beta >0$ i.e.
  \begin{equation*}
    	\mu_\Lambda (d x^\Lambda ) = \frac{1}{Z_\mu} \e^{- \beta H(x^\Lambda, x^{\mathds{Z} \backslash \Lambda})} \dx x \qquad \mbox{for } x^\Lambda \in \mathds{R}^{\Lambda}.
  \end{equation*}
This case is also contained in the main results of the article, because the Hamiltonian $\beta H$ still satisfies the structural Assumptions~\eqref{e_cond_psi}~-~\eqref{e_strictly_diag_dominant}. Of course, the LSI constant of Theorem~\ref{p_mr_OR} would depend on the inverse temperature~$\beta$. 
\end{remark}

\begin{remark}
  Because we assume that the matrix $M= (M_{ij})$ is strictly diagonal dominant (cf.~\eqref{e_strictly_diag_dominant}), the full single-site potential 
  \begin{equation*}
    \psi_i(x_i) + M_{ii} x_i^2 = M_{ii} x_i^2 + \psi_i^c(x_i) + \psi_i^b (x_i)
  \end{equation*}
is perturbed strictly-convex. We want to note that this is the same structural assumption as used in the article~\cite{MO}. 
\end{remark}

Let us turn to an application of Theorem~\ref{p_mr_OR}. We will show how the decay of correlations condition~\eqref{e_cond_alg_decay_OR} combined with the uniform LSI of Theorem~\ref{p_mr_OR} yields the uniqueness of the infinite-volume Gibbs measure. The statement that a uniform LSI yields the uniqueness of the Gibbs state is already known from the case of finite-range interaction (cf. for example~\cite{Yos_2}, the~conditions (DS1), (DS2), and (DS3) in~\cite{Yos01}). The related arguments of~\cite{Roy07},~\cite{Zitt}, and~\cite{Yos01} are based on semigroup properties of an associated diffusion process. Though the semigroup probably may work in the case of infinite-range interaction, we follow a more straightforward approach to deduce the uniqueness of the Gibbs measure. Before we formulate the precise statement (see Theorem~\ref{p_unique_Gibbs} below), we specify the notion of an infinite-volume Gibbs measure.
\begin{definition}[Infinite-Volume Gibbs measure]
  Let $\mu$ be a probability measure on the state space $\mathds{R}^{\mathds{Z}^d}$ equipped with the standard product Borel sigma-algebra. For any finite subset $\Lambda \subset \mathds{Z}^d$ we decompose the measure $\mu$ into the conditional measure $\mu(dx^\Lambda| x^{\mathds{Z}^d \backslash \Lambda})$ and the marginal $\bar \mu (d x^{\mathds{Z}^d \backslash \Lambda})$. This means that for any test function $f$ it holds
\begin{equation*}
  \int f(x) \mu (dx) = \int \int f(x) \mu(dx^\Lambda| x^{\mathds{Z}^d \backslash \Lambda}) \bar \mu (d x^{\mathds{Z}^d \backslash \Lambda}).
\end{equation*}
We say that the measure $\mu$ is the infinite-volume Gibbs measure associated to the Hamiltonian $H$, if the conditional measures $\mu(dx^\Lambda| x^{\mathds{Z}^d \backslash \Lambda})$ are given by the finite-volume Gibbs measures $\mu_{\Lambda}(dx^\Lambda)$ defined by~\eqref{e_d_Gibbs_measure} i.e.
\begin{equation*}
  \mu(dx^\Lambda| x^{\mathds{Z}^d \backslash \Lambda}) = \mu_\Lambda (dx^\Lambda).
\end{equation*} 
The equations of the last identity are also called Dobrushin-Lanford-Ruelle (DLR) equations. 
\end{definition}
The precise statement connecting the decay of correlations with the uniqueness of the infinite-volume Gibbs measure is:
\begin{theorem}[Uniqueness of the infinite-volume Gibbs measure]\label{p_unique_Gibbs}
 Under the same assumptions as in Theorem~\ref{p_mr_OR}, there is at most one unique Gibbs measure $\mu$ associated to the Hamiltonian $H$ satisfying the uniform bound
\begin{equation}~\label{e_sup_moment}
    \sup_{i \in \mathds{Z}^d} \int (x_i)^2 \mu (dx) < \infty.
\end{equation}
 \end{theorem}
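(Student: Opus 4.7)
The plan is to reduce the uniqueness of $\mu$ to showing $\int f\, d\mu = \int f\, d\nu$ for every smooth bounded cylinder function $f$ with bounded gradient, depending on coordinates in a finite set $S \subset \mathds{Z}^d$, where $\nu$ is any second Gibbs measure satisfying~\eqref{e_sup_moment}. Fix such an $f$ and a reference site $i_0 \in S$. For a finite box $\Lambda \supset S$, the DLR equations give
\begin{equation*}
\int f\, d\mu = \int F_\Lambda(x^{\mathds{Z}^d \backslash \Lambda})\, \bar\mu(dx^{\mathds{Z}^d \backslash \Lambda}), \quad F_\Lambda(x^{\mathds{Z}^d \backslash \Lambda}) := \int f\, \mu_\Lambda(dx^\Lambda \mid x^{\mathds{Z}^d \backslash \Lambda}),
\end{equation*}
and analogously for $\nu$. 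Hence it suffices to show that $F_\Lambda$ becomes essentially constant in the boundary configuration as $\Lambda \uparrow \mathds{Z}^d$.

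Since $x_k$ (for $k \notin \Lambda$) enters the Hamiltonian, modulo terms independent of $x^\Lambda$, only through the cross-coupling $x_k \sum_{j \in \Lambda} M_{kj} x_j$, differentiating under the integral gives the covariance identity
\begin{equation*}
\partial_{x_k} F_\Lambda(x^{\mathds{Z}^d \backslash \Lambda}) = -\sum_{j \in \Lambda} M_{kj}\, \cov_{\mu_\Lambda}(f, x_j), \qquad k \in \mathds{Z}^d \backslash \Lambda.
\end{equation*}
The uniform LSI from Theorem~\ref{p_mr_OR}, combined with the covariance estimate of Proposition~\ref{p_algebraic_decay_correlations}, extends the covariance decay~\eqref{e_cond_alg_decay_OR} from linear observables to the nonlinear cylinder $f$, yielding $|\cov_{\mu_\Lambda}(f, x_j)| \lesssim \|\nabla f\|_\infty\, |i_0 - j|^{-(d+\alpha)}$. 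Convolving with the interaction decay~\eqref{e_cond_inter_alg_decay_OR} and using that $|\cdot|^{-(d+\alpha)} \ast |\cdot|^{-(d+\alpha)} \lesssim |\cdot|^{-(d+\alpha)}$ on $\mathds{Z}^d$,
\begin{equation*}
|\partial_{x_k} F_\Lambda(x^{\mathds{Z}^d \backslash \Lambda})| \lesssim \frac{\|\nabla f\|_\infty}{|k - i_0|^{d+\alpha}},
\end{equation*}
uniformly in $\Lambda$ and the boundary configuration.

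Integrating along the ray from the zero configuration to $x^{\mathds{Z}^d \backslash \Lambda}$, then taking $L^1(\bar\mu)$ and applying Cauchy--Schwarz with the moment bound~\eqref{e_sup_moment}, gives
\begin{equation*}
\int |F_\Lambda(x^{\mathds{Z}^d \backslash \Lambda}) - F_\Lambda(0)|\, \bar\mu(dx^{\mathds{Z}^d \backslash \Lambda}) \lesssim \|\nabla f\|_\infty \sqrt{M_0} \sum_{k \in \mathds{Z}^d \backslash \Lambda} \frac{1}{|k - i_0|^{d+\alpha}} \xrightarrow{\Lambda \uparrow \mathds{Z}^d} 0,
\end{equation*}
with $M_0 := \sup_i \int x_i^2\, d\mu$ and the series convergent because $\alpha > 0$. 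The identical bound holds for $\bar\nu$, so by the triangle inequality $\int f\, d\mu = \int f\, d\nu$ in the limit, forcing $\mu = \nu$.

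The main obstacle is the step promoting covariance decay from the linear observables $x_i$ (as assumed in~\eqref{e_cond_alg_decay_OR}) to the nonlinear smooth cylinder $f$ entering the derivative of $F_\Lambda$; this is precisely where the uniform LSI of Theorem~\ref{p_mr_OR} together with the covariance estimate of Proposition~\ref{p_algebraic_decay_correlations} is indispensable. A secondary point requiring care is the interchange of differentiation, path integration, and the infinite boundary sum, but this is controlled because $\sum_{k \in \mathds{Z}^d\backslash \Lambda} |x_k|/|k - i_0|^{d+\alpha}$ is $\bar\mu$- and $\bar\nu$-integrable thanks to~\eqref{e_sup_moment} and Cauchy--Schwarz.
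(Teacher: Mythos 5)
Your overall architecture (compare conditional expectations through the boundary configuration, use the moment bound \eqref{e_sup_moment} together with the summability of $|k-i_0|^{-(d+\alpha)}$ over $\mathds{Z}^d\backslash\Lambda$) is sound and close in spirit to the paper's argument. The genuine gap is precisely the step you flag as the main obstacle: the estimate $|\cov_{\mu_\Lambda}(f,x_j)|\lesssim \|\nabla f\|_\infty\,|i_0-j|^{-(d+\alpha)}$, uniformly in $\Lambda$ and in the tempered boundary data. This does not follow from ``uniform LSI plus Proposition~\ref{p_algebraic_decay_correlations}'' as asserted. Proposition~\ref{p_algebraic_decay_correlations} is stated only for observables each depending on a single site, and, more importantly, its hypotheses are not available for $\mu_\Lambda$ itself: it requires the single-site conditional measures to satisfy a PI with constants $\varrho_i$ making the matrix $A$ strictly diagonally dominant against $\kappa_{ij}\geq |M_{ij}|$. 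Under the weak assumptions \eqref{e_cond_psi}--\eqref{e_strictly_diag_dominant} the single-spin PI constants come from Holley--Stroock and need not dominate $\sum_j|M_{ij}|$; this is exactly why the paper applies that proposition only to the convexified auxiliary measure $\mu_{\mathrm{aux},c}$ inside the proof of Lemma~\ref{p_crucial_lemma _1_OR}, and why the decay of correlations \eqref{e_cond_alg_decay_OR} is an \emph{assumption} of Theorem~\ref{p_mr_OR} rather than a consequence of the LSI. A uniform LSI alone gives no spatial decay of covariances, so the central estimate of your proof is unproven as written.

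The step can plausibly be repaired with the paper's tools, e.g.\ by conditioning on $x^S$: write $\cov_{\mu_\Lambda}(f,x_j)=\cov_{\bar\mu}(f,g_j)$ with $g_j(x^S)=\int x_j\,\mu_\Lambda(dx\mid x^S)$, note $\partial_{x_i}g_j=-\sum_{l}M_{il}\cov_{\mu_{\Lambda\backslash S}}(x_j,x_l)$, which decays in $|i-j|$ by \eqref{e_cond_inter_alg_decay_OR}, by \eqref{e_cond_alg_decay_OR} applied to the conditioned finite-volume measures (this is where the ``strong'' form of the assumption is essential), and by Lemma~\ref{p_est_var_ss} for the diagonal term; then Cauchy--Schwarz and the uniform PI inherited by the marginal on $S$ give the claimed decay. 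The paper instead circumvents the issue altogether: it interpolates between two boundary configurations on a ball $B_R$ and splits the resulting covariance so that in the term $T_1$ the decay comes from the interaction coefficients $M_{ij}$ across the annulus between $\supp f$ and $\mathds{Z}^d\backslash B_R$, while in $T_2$ only covariances of \emph{linear} observables appear, to which \eqref{e_cond_alg_decay_OR} applies verbatim; the LSI enters only through the PI to bound variances. That route avoids any promotion of correlation decay to nonlinear observables, which is exactly where your proposal currently fails.
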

The moment condition~\eqref{e_sup_moment} in Theorem~\ref{p_unique_Gibbs} is standard in the study of infinite-volume Gibbs measures (see for example~\cite{BHK82} and~\cite[Chapter~4]{Roy07}). It is relatively easy to show that the condition~\eqref{e_sup_moment} is invariant under adding a bounded random field to the Hamiltonian~$H$ (cf.~Remark~\ref{r_linear_term}). \smallskip

Theorem~\ref{p_unique_Gibbs} is one of the \emph{well-known} statements for which it is hard to find a proof. Therefore we state the proof in full detail in the Appendix~\ref{s_decay_and_uniqueness}. The argument does not need that the finite-volume Gibbs measures $\mu_{\Lambda}$ satisfy a uniform LSI. It suffices that the finite-volume Gibbs measures $\mu_{\Lambda}$ satisfy a uniform PI, which is a weaker condition then the LSI (see~Definition~\ref{d_SG}).   \medskip

We also want to note that the main results of this article, namely Theorem~\ref{p_mr_OR} and Theorem~\ref{p_unique_Gibbs} were applied in~\cite{MN} to deduce a uniform LSI and the uniqueness of the infinite-volume Gibbs measure of a one-dimensional lattice system with long-range interaction, generalizing Zegarlinsk's result~\cite[Theorem~4.1.]{Zeg96} to interactions of infinite range.

\begin{remark}
  In this article, we do not show the existence of an infinite-volume Gibbs measure. However, the author of this article believes that under the assumption~\eqref{e_sup_moment} the existence should follow by an compactness argument similarly to the one used in~\cite{BHK82}.   
\end{remark}

In order to avoid confusion, let us make the notation $a \lesssim b$ from above precise.
\begin{definition}\label{def:dep}
	We will use the notation $a \lesssim b$ for quantities $a$ and $b$
	to indicate that there is a constant $C \ge 0$
	which depends only on a lower bound for $\delta$ and upper bounds for $|\psi_i^b|$, $|(\psi_i^b)'|$, and $\sup_i \sum_{j \in \mathds{Z}^d} |M_{ij}|$ such that $a \le C b$. In the same manner, if we assert the existence of certain constants, they may freely depend on the above mentioned quantities, whereas all other dependencies will be pointed out.
\end{definition}

We close the introduction by giving an outline of the article.\smallskip
\begin{itemize}
\item In Section~\ref{s_covariance_estimate}, we prove Lemma~\ref{p_comparison_covariances}. This contains the comparison principle for covariances of smaller systems to larger systems.
\item In Section~\ref{s_LSI}, we consider the generalization of Theorem~\ref{p_OR_original} and give the proof of Theorem~\ref{p_mr_OR}. 
\item In the Appendix~\ref{s_decay_and_uniqueness}, we consider the uniqueness of the infinite-volume Gibbs measure and give the proof of Theorem~\ref{p_unique_Gibbs}.
\item In the Appendix~\ref{s_BE_HS} we state some well-known facts about the LSI and the PI. 
\end{itemize}

\section{Comparing covariances of a smaller system to covariances to a bigger system: Proof of Lemma~\ref{p_comparison_covariances}}\label{s_covariance_estimate}

The proof of Lemma~\ref{p_comparison_covariances} uses an idea of Sylvester of expanding the exponential function~\cite{Sylvester}. Sylvester used this idea to give a simple unified derivation of a bunch of correlation inequalities for ferromagnets.

\begin{proof}[Proof of Lemma~\ref{p_comparison_covariances}]
  We fix the spin values $m_i$, $i \in \Lambda_{\mathrm{tot}} \backslash \Lambda$. Recall that in our notations $\mu_\Lambda$ coincides with the conditional measure 
  \begin{align*}
    \mu_{\Lambda}(d x^{\Lambda}) = \mu_{\Lambda_{\mathrm{tot}}}(dx^\Lambda | m^{\Lambda_{\mathrm{tot}} \backslash \Lambda }).
  \end{align*}
We introduce the auxiliary Hamiltonian $H_\alpha$, $\alpha >0$, by the formula 
  \begin{align*}
    H_\alpha (x) = H(x) + \alpha \sum_{i \in \Lambda_{\mathrm{tot}}\backslash \Lambda }(x_i - m_i)^2.
  \end{align*}
We denote by $\mu_{\alpha}$ the associated Gibbs measure active on the sites $\Lambda_{\mathrm{tot}}$. The measure $\mu_{\alpha}$ is given by the density
\begin{align*}
  \mu_{\alpha} (dx) = \frac{1}{Z} \ \exp \left( - H_{\alpha} (x) \right) \ d x \qquad \mbox{for } x \in \mathbb{R}^{\Lambda_{\mathrm{tot}}}.
\end{align*}
Note that the measure $\mu_{\alpha}$ interpolates between the measure $\mu_{\Lambda}$ and $\mu_{\Lambda_{\mathrm{tot}}}$ in the sense that $\mu_0 = \mu_{\Lambda_{\mathrm{tot}}}$ and for any integrable function $f: \mathbb{R}^{\Lambda} \to \mathbb{R}$
\begin{align*}
 \lim_{\alpha \to \infty} \int f(x^\Lambda)  \mu_\alpha (dx^{\Lambda_{\mathrm{tot}}}) = \int f(x^\Lambda) \mu(dx^\Lambda | m^{\Lambda_{\mathrm{tot}} \backslash \Lambda}).
\end{align*}
So we formally have $\mu_{\infty} = \mu_{\Lambda}$
Therefore it also holds for $i, j \in \Lambda$
\begin{align*}
   \lim_{\alpha \to \infty} \cov_{\mu_\alpha} (x_i, x_j) = \cov_{\mu_{\Lambda}} (x_i, x_j) 
\end{align*}
This yields by the fundamental theorem of calculus that
\begin{align*}
  \cov_{\mu_\infty} (x_i, x_j) - \cov_{\mu_0} (x_i, x_j) = \int_0^\infty \frac{d}{d \alpha} \cov_{\mu_\alpha} (x_i, x_j).
  \end{align*}
We will now show that $$\frac{d}{d \alpha} \cov_{\mu_\alpha} (x_i, x_j) < 0,$$ which yields the statement of Lemma~\ref{p_comparison_covariances}.\newline 
Indeed, direct calculation shows that
\begin{align*}
 & \frac{d}{d \alpha} \cov_{\mu_\alpha} (x_i, x_j)\\
 & \quad =   \frac{d}{d \alpha}  \left( \int x_i x_j \mu_{\alpha} - \int x_i \mu_{\alpha} \int x_j \mu_{\alpha}  \right) \\
  & \quad = - \cov_{\mu_\alpha} \left(  x_i x_j  - \int x_i \mu_{\alpha} \int x_j \mu_{\alpha} , \sum_{l \in \Lambda_{\mathrm{tot}} \backslash \Lambda }(x_l - m_l)^2 \right) \\
    & \quad = - \cov_{\mu_\alpha} \left(  x_i x_j , \sum_{l \in \Lambda_{\mathrm{tot}}\backslash \Lambda }(x_l - m_l)^2 \right).
\end{align*}
We will show now that 
\begin{align}
  \cov_{\mu_\alpha} \left(  x_i x_j  , \sum_{l \in \Lambda_{\mathrm{tot}} \backslash \Lambda }(x_l - m_l)^2 \right) \geq 0. \label{e_cond_covariances_crucial_estimate}
\end{align}
For this purpose, we follow the method by Sylvester~\cite{Sylvester} of expanding the interaction term. Recall that this method is also used to show for example that 
\begin{align*}
  \cov_{\mu_\alpha} \left(  x_i , x_j \right) \geq 0,
\end{align*}
provided the interactions are ferromagnetic. By doubling the variables we get
\begin{align*}
&    \cov_{\mu_\alpha} \left(  x_i x_j , \sum_{l \in \Lambda_{\mathrm{tot}}\backslash \Lambda }(x_l - m_l)^2 \right) \\
& = \int ( x_i x_j - \tilde x_i \tilde x_j ) \sum_{l \in \Lambda_{\mathrm{tot}} \backslash \Lambda }(x_l - m_l)^2 \mu_\alpha (dx) \ \mu_\alpha (d \tilde x) \\
&=  \frac{1}{Z^2}\int ( x_i x_j - \tilde x_i \tilde x_j ) \sum_{l \in \Lambda_{\mathrm{tot}} \backslash \Lambda }(x_l - m_l)^2 \exp (- H_\alpha (x) - H_\alpha (\tilde x)) dx d \tilde x
\end{align*}
Because the partition function $Z>0$ is positive, the sign of the covariance is determined by the integral on the right hand side of the last identity.
We change variables according to $x_i=  (p_i + q_i)$ and $\tilde x_i =  (p_i - q_i)$ and get
\begin{align}
&  \int ( x_i x_j - \tilde x_i \tilde x_j ) \sum_{l \in \Lambda_{\mathrm{tot}\backslash \Lambda }}(x_l - m_l)^2 \exp (- H_\alpha (x) - H_\alpha (\tilde x)) dx d \tilde x \notag \\
& \quad  = C \int ( (p_i + q_i) (p_j + q_j) - (p_i - q_i) (p_j - q_j) ) \notag \\
& \qquad \times \sum_{l \in \Lambda_{\mathrm{tot}}\backslash \Lambda }((p_l + q_l) - m_l)^2 \exp (- H_\alpha (p-q) - H_\alpha (p+q)) dp dq \notag \\
& \quad = C \int ( 2 p_i  q_j  + 2 q_i p_j)  \notag \\
& \qquad \times \sum_{l \in \Lambda_{\mathrm{tot}}\backslash \Lambda }((p_l + q_l) - m_l)^2 \exp (- H_\alpha (p-q) - H_\alpha (p+q)) dp dq  \label{e_integral_to_expand}
\end{align}
where $C>0$ is the constant from the transformation. Straightforward calculation reveals
\begin{align}
&   \tilde H_\alpha (p,q)  = H_\alpha (p-q) + H_\alpha (p+q) \notag \\
 &  = \sum_l   \psi_l (p_l-q_l) + \psi_l (p_l + q_l) +4 m_l p_l  + \alpha (p_l -  q_l )^2 + \alpha (p_l + q_l )^2 + 2 m_l^2  \notag \\
 & \qquad \qquad + 2 p \cdot  M  p + 2 q \cdot M q  . \label{e_calc_tilde_H} 
\end{align}
For convenience, we only consider the first summand on the right hand side of~\eqref{e_integral_to_expand}. The second summand can be estimated in the same way. \newline
Due to symmetry of $\tilde H_\alpha (p,q)$ in the $q_l$ variables it holds
\begin{align*}
\int   q_j  \sum_{l \in \Lambda_{\mathrm{tot}}\backslash \Lambda }((p_l + q_l) - m_l)^2 \exp (-   \tilde H_\alpha (p,q)) dp dq  = 0
\end{align*}
Therefore we get by doubling the variable $p$ first and then changing of variables $p = r + \tilde q$ and $\tilde p = r - \tilde q$ that
\begin{align}
& \int  2 p_i  q_j  \sum_{l \in \Lambda_{\mathrm{tot}\backslash \Lambda }}((p_l + q_l) - m_l)^2 \exp (- \tilde H_\alpha (p,q)) dp dq  \notag \\
& \quad = \frac{1}{Z} \int  2 (p_i - \tilde p_i )  q_j  \sum_{l \in \Lambda_{\mathrm{tot}\backslash \Lambda }}((p_l + q_l) - m_l)^2 \notag \\
& \qquad \qquad \times \exp (- \tilde H_\alpha (p,q)- \tilde H_\alpha (\tilde p,q) ) d \tilde p dp dq \notag \\
&  \quad = \frac{1}{Z} \int  4 \tilde q_i   q_j  \sum_{l \in \Lambda_{\mathrm{tot}\backslash \Lambda }}((r_l+\tilde q_l  + q_l) - m_l)^2 \notag \\
& \qquad \qquad \times \exp (-  \tilde{\tilde{H}}_\alpha (r,\tilde q,q) ) d \tilde q dq dr , \label{e_integral_to_expand_finally}
\end{align}
where the Hamiltonian $\tilde{\tilde{H}}_\alpha (r,\tilde q,q)$ is given by
\begin{align*}
&  \tilde{\tilde{H}}_\alpha (r,\tilde q,q) \\
& = \tilde H_\alpha (r + \tilde q , q) + \tilde H_\alpha (r - \tilde q , q) \\
& = H_\alpha (r + \tilde q-q) + H_\alpha (r + \tilde q+q) + H_\alpha (r - \tilde q-q) + H_\alpha (r - \tilde q+q)
\end{align*}
As we have seen in~\eqref{e_calc_tilde_H} from above, the Hamiltonian $\tilde{\tilde{H}}_\alpha (r,\tilde q,q)$ contains no mixed terms in the variables $r, \tilde q$ and $q$. More precisely, $\tilde{\tilde{H}}_\alpha (r,\tilde q,q)$ has three interaction terms i.e. 
\begin{align*}
4  r \cdot M r, \qquad 4 \tilde q \cdot M \tilde q , \qquad \mbox{and} \qquad 2 q \cdot M q.
\end{align*}
So we can rewrite  $\tilde{\tilde{H}}_\alpha (r,\tilde q,q)$ as
\begin{align*}
   \tilde{\tilde{H}}_\alpha (r,\tilde q,q) & = F(r,\tilde q, q) + 4  r \cdot M r + 4  \tilde q \cdot M \tilde q + 2 q \cdot M q,
\end{align*}
where the function $F$ is of the form
\begin{align*}
  F(r, \tilde q, q) = \sum_l \tilde{\psi}_l (r_l, \tilde q_l , q_l)
\end{align*}
for some single-site potentials $\tilde{\psi}_l$ that are symmetric in the variables $\tilde q_l$ and $q_l$. Expanding the term 
\begin{align*}
   \exp(  - 4 \tilde q \cdot M \tilde q  - 2 q \cdot M q)  
\end{align*}
on the right hand side of~\eqref{e_integral_to_expand_finally} yields a sum of terms of the form
\begin{align*}
& -M_{mn}  \int  q_{l_1}^{n_1} \cdots q_{l_k}^{n_k} \tilde q_{\tilde l_1}^{\tilde n_1} \ldots \tilde q_{\tilde l_1}^{\tilde n_1}  ((r_l+\tilde q_l  - q_l) - m_l)^2 \\
& \qquad \times \exp \left(  - \sum_l \tilde{\psi}_l (r_l, \tilde q_l , q_l) -4 r  \cdot (M) r\right) d\tilde q dq dr.
\end{align*}
Because the functions $\tilde{\psi}_l$ are symmetric in the variables $\tilde q_l$ and $q_l$ any term with an odd exponent vanishes. Hence, the exponents $n_1 ,\ldots, n_k, $ and $\tilde n_1 ,\ldots, \tilde n_k, $ are all even. Because $-M_{mn} \geq 0 $ due to the fact that the interaction is ferromagnetic we get 
\begin{align*}
& -M_{mn}  \int  q_{l_1}^{n_1} \cdots q_{l_k}^{n_k} \tilde q_{\tilde l_1}^{\tilde n_1} \ldots \tilde q_{\tilde l_1}^{\tilde n_1}  ((r_l+\tilde q_l  - q_l) - m_l)^2 \\
& \qquad \times \exp \left(  - \sum_l \tilde{\psi}_l (r_l, \tilde q_l , q_l) -4 r  \cdot (M) r\right) d\tilde q dq dr \geq 0.
\end{align*}
All in all, the last inequality yields the desired estimate~\eqref{e_cond_covariances_crucial_estimate} and therefore completes the proof.
\end{proof}

\section{The Logarithmic Sobolev inequality: proof of Theorem~\ref{p_mr_OR}}\label{s_LSI}

This section is devoted to the proof of Theorem~\ref{p_mr_OR}.  We adapt the strategy of Otto \& Reznikoff~\cite[Theorem 3]{OR07} to our situation. 
Recall that compared to Theorem~\ref{p_mr_OR}, we work with weaker assumptions:
\begin{itemize}
\item The single-site potentials $\psi_i$ are only quadratic and not super-quadratic (cf.~\eqref{e_cond_psi} vs.~\eqref{e_single_site_potential_otto}). Also note that in Theorem~\ref{p_OR_original} it is assumed that $M_{ii}=0$, whereas in Theorem~\ref{p_mr_OR} it is assumed that $M_{ii} \geq c >0$ (cf.~\eqref{e_strictly_diag_dominant}). In order to compare both statements it makes sense to think of the single-site potentials in Theorem~\ref{p_mr_OR} as
  \begin{align*}
    \psi_i (x_i) + \frac{1}{2} M_{ii} x_i^2.
  \end{align*}
 \item The interactions $M_{ij}$ decay only algebraically and not exponentially (cf.~\eqref{e_decay_inter_Otto} vs.~\eqref{e_cond_inter_alg_decay_OR}).
\item The correlations are decaying only algebraically and not exponentially (cf. \eqref{e_decay_corr_Otto}~vs.~\eqref{e_cond_alg_decay_OR}).
\end{itemize}
The algebraic decay of interactions and correlations is easy to incorporate in the original argument of~\cite{OR07}, whereas using quadratic and not super-quadratic potentials represents the main technical challenge of the proof. \medskip

The crucial ingredients in the proof of~\cite[Theorem 3]{OR07} are two auxiliary lemmas, namely \cite[Lemma 3 and Lemma 4]{OR07}. A careful analysis of the proof of~\cite{OR07} shows that only this part of the argument is sensitive to weakening the assumptions. Once the analog statements under weaker assumptions (see Lemma~\ref{p_crucial_lemma _1_OR} and Lemma~\ref{p_crucial_lemma _1_OR} below) are verified, the rest of the argument of~\cite[Theorem 3]{OR07} would work the same and is skipped in this article. The remaining part of the argument is based on an recursive application of a general principle, namely the two-scale criterion for LSI (cf.~\cite[Theorem 1]{OR07}), and is therefore not sensitive to changing the assumptions. Hence for the proof of Theorem~\ref{p_mr_OR} it suffices to show that the auxiliary lemmas \cite[Lemma 3 and Lemma 4]{OR07} remain valid under weakening the assumptions. \medskip

Let us turn to the first auxiliary Lemma (cf.~\cite[Lemma~3]{OR07} or Lemma~\ref{p_crucial_lemma _1_OR} from below). It states that the single-site conditional measures satisfy a LSI uniformly in the in the system size and the conditioned spin-values. The argument of \cite[Lemma~3]{OR07} by Otto \& Reznikoff is heavily based on the assumption that the single-site potential $\psi$ is super-quadratic. At this point we provide a new, different, and more elaborated argument showing that the statement of~\cite[Lemma~3]{OR07} remains valid if the single-site potential~$\psi$ is only perturbed quadratic. One could say that the proof of Lemma~\ref{p_crucial_lemma _1_OR} represents the main new ingredient compared to the argument of~\cite{OR07}.

\begin{lemma}[Generalization of~\mbox{\cite[Lemma~3]{OR07}}]\label{p_crucial_lemma _1_OR}

We assume the same conditions as in Theorem~\ref{p_mr_OR}. We consider for an arbitrary subset $S \subset \Lambda_{\mathrm{tot}}$ and site $i \in S$ the single-site conditional measure 
  \begin{align*}
    \bar \mu (dx_i | x^S) := \frac{1}{Z} \exp (- \bar H ( (x^S)) d x_i
  \end{align*}
with Hamiltonian
\begin{align}\label{d_coarse_grained_hamiltonian}
  \bar H (  x^S) = - \log \int \exp(- H(x))  dx^{\Lambda_{\mathrm{tot}} \backslash S}.  
\end{align}
Then the single-site conditional measure $\bar \mu (dx_i | x^S)$ satisfies a LSI with constant $\varrho>0$ (cf.~Definition~\ref{d_LSI}) that is uniform in $\Lambda_{\mathrm{tot}}$, $S$ and the conditioned spins~$x^S$.
\end{lemma}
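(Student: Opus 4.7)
The plan is to exhibit the log-density of $\bar\mu(dx_i\mid x^S)$ in $x_i$ as a uniformly strictly convex potential plus a uniformly bounded perturbation, and then to conclude via the Bakry--\'Emery criterion combined with the Holley--Stroock principle. Setting $T := \Lambda_{\mathrm{tot}} \setminus S$, I would observe that $\bar\mu(dx_i\mid x^S)$ is the marginal on $x_i$ of the finite-volume Gibbs measure $\nu := \mu_{\Lambda_{\mathrm{tot}}}(\,\cdot\mid x^{S\setminus\{i\}})$ on $T \cup \{i\}$. Writing the density explicitly, the effective log-density takes the form
\begin{equation*}
\bar F(x_i) = \psi_i(x_i) + \tfrac{1}{2} M_{ii} x_i^2 + u_i x_i + G(x_i) + \mathrm{const},
\end{equation*}
where $u_i$ collects the linear contributions coming from $x^{S\setminus\{i\}}$ and from the boundary of $\Lambda_{\mathrm{tot}}$, and
\begin{equation*}
G(x_i) := -\log \int \exp\Bigl(-x_i \sum_{j \in T\setminus\{i\}} M_{ij} x_j - H^{(i)}(x^T)\Bigr)\, dx^T,
\end{equation*}
with $H^{(i)}$ denoting the part of $H$ independent of $x_i$. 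Differentiating under the integral sign, $G$ is concave, and $G'(x_i) = \sum_{j} M_{ij}\,\E_{\nu_{x_i}}[x_j]$, $G''(x_i) = -\var_{\nu_{x_i}}\bigl(\sum_{j} M_{ij} x_j\bigr)$, where $\nu_{x_i}$ denotes the conditional measure of $\nu$ on $x^T$ given $x_i$.

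The central step is to establish $|G''(x_i)| \lesssim 1$ uniformly. Since $\nu_{x_i}$ is itself a finite-volume Gibbs measure subject to the structural assumptions~\eqref{e_cond_psi}--\eqref{e_strictly_diag_dominant}, the new uniform variance estimate Lemma~\ref{p_est_var_ss} (which in turn reduces to the moment estimate Lemma~\ref{lem:moments}) gives $\sup_{j}\var_{\nu_{x_i}}(x_j) \lesssim 1$. Combining the identity $\var_{\nu_{x_i}}(\sum_j M_{ij} x_j) = \sum_{j,k} M_{ij} M_{ik} \cov_{\nu_{x_i}}(x_j,x_k)$ with Cauchy--Schwarz for the covariances, the strict diagonal dominance~\eqref{e_strictly_diag_dominant}, and the algebraic decay of interactions~\eqref{e_cond_inter_alg_decay_OR}, produces the required uniform bound on $|G''|$; the moment estimate likewise gives $|G'(0)|\lesssim 1$. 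With $|G''|\le C$ in hand, I would decompose $\bar F = V_c + V_b + \mathrm{const}$ by introducing $\kappa \in (0,M_{ii})$ and setting $V_c(x_i) := \psi_i^c(x_i) + \tfrac{1}{2}(M_{ii}-\kappa)x_i^2 + (u_i + G'(0))x_i$ and $V_b(x_i) := \psi_i^b(x_i) + R(x_i) + \tfrac{1}{2}\kappa x_i^2$, where $R(x_i) := G(x_i) - G(0) - G'(0)x_i$ is the Taylor remainder of $G$ at the origin. Concavity of $G$ gives $R \leq 0$ and the bound on $|G''|$ gives $R \geq -\tfrac{C}{2}x_i^2$, so for $\kappa \geq C$ the piece $R + \tfrac{1}{2}\kappa x_i^2$ is bounded; combined with the boundedness of $\psi_i^b$ this renders $V_b$ uniformly bounded, while $V_c$ is $(M_{ii}-\kappa)$-convex. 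Bakry--\'Emery applied to $e^{-V_c}\,dx_i$ followed by Holley--Stroock then yields the desired LSI for $\bar\mu(dx_i\mid x^S)$ with a uniform constant.

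The hard part will be the simultaneous calibration $C \leq \kappa \leq M_{ii} - \rho$: the uniform bound $C$ on $|G''|$ must be strictly less than $M_{ii}$ by a uniform margin. The naive Cauchy--Schwarz bound $|G''| \leq (M_{ii}-\delta)^2 \sup_j \var_{\nu_{x_i}}(x_j)$ is too crude to secure this in general. The resolution is to sharpen the estimate by exploiting the algebraic decay assumptions~\eqref{e_cond_inter_alg_decay_OR} and~\eqref{e_cond_alg_decay_OR} directly inside the double sum defining $G''$, leading to a triple-convolution bound of the form $\sum_{j,k}(1+|i-j|)^{-d-\alpha}(1+|i-k|)^{-d-\alpha}(1+|j-k|)^{-d-\alpha}$, which is summable of order $\alpha$ and produces a constant strictly compatible with the diagonal dominance margin. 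This is precisely the step where the super-quadratic assumption on $\psi$ in~\cite{OR07} was used to trivialize the argument through Brascamp--Lieb-type control, and where the new covariance estimate of Proposition~\ref{p_algebraic_decay_correlations} together with the uniform variance estimate Lemma~\ref{p_est_var_ss} enter in an essential way.
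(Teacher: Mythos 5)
Your overall frame (perturbed strict convexity of the effective single-site Hamiltonian, then Bakry--\'Emery plus Holley--Stroock) matches the paper, but the way you produce the splitting has two genuine problems. First, the decomposition itself is flawed: with $R(x_i)=G(x_i)-G(0)-G'(0)x_i$ you only know $-\tfrac{C}{2}x_i^2\le R\le 0$, so $R+\tfrac{\kappa}{2}x_i^2$ lies between $\tfrac{\kappa-C}{2}x_i^2$ and $\tfrac{\kappa}{2}x_i^2$; it is bounded \emph{below} but not above (take $G''\equiv 0$, e.g.\ no coupling to $T$, and $V_b=\psi_i^b+\tfrac{\kappa}{2}x_i^2$ is unbounded). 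Holley--Stroock needs a bounded oscillation, so this splitting does not give the LSI. The natural repair is to keep the concave $G$ inside the convex part, but then you need precisely the calibration you flag as ``the hard part'': $\sup_{x_i}\var_{\nu_{x_i}}\bigl(\sum_{j\in T}M_{ij}x_j\bigr)\le M_{ii}+\inf(\psi_i^c)''-\rho$ uniformly. This is the second, deeper gap: none of the tools you invoke can deliver it. The uniform variance bound of Lemma~\ref{p_est_var_ss} comes with a constant (of order $1/\delta$ from the exponential moment estimate) that has no quantitative relation to $M_{ii}$, so Cauchy--Schwarz gives $|G''|\lesssim(M_{ii}-\delta)^2$, which can exceed $M_{ii}$; and the ``triple-convolution'' sharpening only shows the double sum is \emph{finite} with a constant built from the arbitrary prefactors in \eqref{e_cond_inter_alg_decay_OR} and \eqref{e_cond_alg_decay_OR} --- summability of order $\alpha$ says nothing about that constant being below the diagonal-dominance margin. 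The contribution of the sites of $T$ adjacent to $i$ is simply not small, so no smallness argument applied to the whole of $G''$ can close the gap.

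The paper resolves exactly this point by a different decomposition. It introduces $H_{\mathrm{aux}}=H-\sum_{|j-i|\le R}\psi^b_j$, i.e.\ it removes the nonconvex perturbations only in an $R$-ball around $i$, and takes $\tilde\psi_i^c$ to be the log-marginal of $e^{-H_{\mathrm{aux}}}$; the bounded perturbation is then the log-ratio of the two partition functions, of size at most $2(R+1)^d\|\psi^b\|_\infty$. For the convexity of $\tilde\psi_i^c$, the near-field interactions are \emph{not} treated as a perturbation of $M_{ii}$: they are absorbed into the convex part through a Brascamp--Lieb/marginal-of-convex-Hamiltonian bound for the block $\{j:|j-i|\le R\}$, which is uniformly positive thanks to strict diagonal dominance. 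Only the far-field pieces of the dangerous variance remain, and those are made small (like $R^{-\alpha/2}$ and $R^{-\tilde\alpha/2}$) by choosing $R$ large, using the decay of interactions, the covariance estimate of Proposition~\ref{p_algebraic_decay_correlations} applied to the locally convexified conditional measure, and the uniform variance bound. So the smallness is only ever required of terms that genuinely are small, which is the step your argument is missing; you would need to reorganize your proof along these lines (or supply a new argument for the calibration $\sup|G''|<M_{ii}-\rho$, which does not follow from the stated assumptions via the bounds you propose).
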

We state the proof of Lemma~\ref{p_crucial_lemma _1_OR} in Section~\ref{s_crucial_lemma_1_OR}.  \medskip

Let us turn to the second auxiliary Lemma (cf.~\cite[Lemma~4]{OR07} or Lemma~\ref{p_crucial_lemma _2_OR} from below). For some fixed but large enough integer $K$ let us consider the $K$-sublattice $\Lambda_K$ given by
\begin{align}\label{e_def_sublattice}
  \Lambda_K := K \mathbb{Z}^d \cap \Lambda_{\mathrm{tot}}.
\end{align}
Let S an arbitrary subset satisfying $\Lambda_K \subset S \subset \Lambda_{\mathrm{tot}}$. The second auxiliary lemma states that measure on $\Lambda_K$, which is conditioned on the spins in $S \backslash \Lambda_{K}$ and averaged over the spins in $\Lambda_{\mathrm{tot}} \backslash S$, satisfies a LSI with constant $\varrho>0$ uniformly in $S$ and the conditioned spins:
\begin{lemma}[Generalization of~\mbox{\cite[Lemma~4]{OR07}}]\label{p_crucial_lemma _2_OR}
We assume the same conditions as in Theorem~\ref{p_mr_OR}. Let $S$ be an arbitrary set with $\Lambda_K \subset S \subset \Lambda_{\mathrm{tot}}$. Consider the conditional measure
  \begin{align*}
    \bar \mu (dx^{\Lambda_K} | x^{ S \backslash \Lambda_K}) := \frac{1}{Z} \exp (- \bar H ( x^S))  d x^ \Lambda_K
  \end{align*}
with Hamiltonian
\begin{align*}
  \bar H (  x^S) = - \log \int \exp(- H(x))  dx^{\Lambda_{\mathrm{tot}} \backslash S}.  
\end{align*}
Then there is some integer $K$ such that the conditional measure $ \bar \mu (dx^{\Lambda_K} | x^{S \backslash \Lambda_K})$ satisfies a LSI with constant $\varrho>0$ (cf.~Definition~\ref{d_LSI}) that is uniform in $\Lambda_{\mathrm{tot}}$, $S$ and the conditioned spins~$x^{S \backslash \Lambda_K}$.
\end{lemma}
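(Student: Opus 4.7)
The plan is to view $\bar\mu(dx^{\Lambda_K} \mid x^{S\setminus\Lambda_K})$ as a probability measure on $\mathds{R}^{\Lambda_K}$ with Hamiltonian $\bar H(\,\cdot\,, x^{S\setminus\Lambda_K})$ and apply the Otto--Reznikoff criterion (i.e.\ \cite[Theorem~1]{OR07}, iterated or in the form of its multi-block corollary). For the criterion to deliver a uniform LSI, two inputs are needed: (i) each single-site conditional of $\bar\mu(dx^{\Lambda_K} \mid x^{S\setminus\Lambda_K})$ must satisfy an LSI with a constant $\varrho_1>0$ uniform in $\Lambda_{\mathrm{tot}}$, $S$, and the conditioned spins; and (ii) the off-diagonal entries of $\Hess \bar H$ restricted to the block $\Lambda_K\times\Lambda_K$ must satisfy $\sup_i \sum_{j\in\Lambda_K, j\neq i} \|\partial_i\partial_j \bar H\|_\infty < \varrho_1$. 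The resulting LSI constant is then at least $\varrho_1 - \sup_i\sum_{j\neq i} \|\partial_i\partial_j\bar H\|_\infty$, which is positive for sufficiently large $K$.

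For input (i), note that fixing all $\Lambda_K$-spins other than $x_i$ in $\bar\mu(dx^{\Lambda_K} \mid x^{S\setminus\Lambda_K})$ produces the conditional measure $\bar\mu(dx_i \mid x^{S\setminus\{i\}})$, which is exactly the object to which Lemma~\ref{p_crucial_lemma _1_OR} applies; hence a uniform $\varrho_1>0$ exists. For input (ii), differentiating the log partition function and using $i \in S$ gives
\begin{equation*}
  \partial_i\partial_j \bar H(x^S) = M_{ij} - \cov_{\mu'}(\partial_i H,\partial_j H), \qquad i\neq j\in\Lambda_K,
\end{equation*}
where $\mu' := \mu(dx^{\Lambda_{\mathrm{tot}}\setminus S} \mid x^S)$ is a finite-volume Gibbs measure on $\Lambda_{\mathrm{tot}}\setminus S$. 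Since $i,j\in S$ are frozen under $\mu'$, only the interaction parts of $\partial_i H$ and $\partial_j H$ contribute, yielding
\begin{equation*}
  \cov_{\mu'}(\partial_i H, \partial_j H) = \sum_{k,l\in\Lambda_{\mathrm{tot}}\setminus S} M_{ik} M_{jl}\,\cov_{\mu'}(x_k,x_l).
\end{equation*}

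Now I invoke the two decay assumptions. By~\eqref{e_cond_alg_decay_OR} applied to $\mu'$ (valid since $\Lambda_{\mathrm{tot}}\setminus S\subset\Lambda_{\mathrm{tot}}$ and the bound is uniform in boundary data), $|\cov_{\mu'}(x_k,x_l)| \lesssim |k-l|^{-d-\alpha}$, and by~\eqref{e_cond_inter_alg_decay_OR}, $|M_{ik}|\lesssim |i-k|^{-d-\alpha}$. Using twice the standard convolution estimate $\sum_{c\in\mathds{Z}^d} |a-c|^{-d-\alpha}|c-b|^{-d-\alpha} \lesssim |a-b|^{-d-\alpha}$ (valid for $\alpha>0$), the double sum over $k,l$ is dominated by $|i-j|^{-d-\alpha}$, so combined with the direct $M_{ij}$ contribution,
\begin{equation*}
  |\partial_i\partial_j \bar H(x^S)| \lesssim \frac{1}{|i-j|^{d+\alpha}}\quad\text{uniformly in } S \text{ and } x^S.
\end{equation*}
Summing over $j\in\Lambda_K\setminus\{i\}$, by writing $j-i = Km$ with $m\in\mathds{Z}^d\setminus\{0\}$ and using that $\sum_{m\neq 0} |m|^{-d-\alpha}$ converges,
\begin{equation*}
  \sum_{j\in\Lambda_K, j\neq i} |\partial_i\partial_j \bar H|_\infty \lesssim \frac{1}{K^{d+\alpha}},
\end{equation*}
which can be made strictly smaller than $\varrho_1$ by choosing $K$ sufficiently large. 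Applying the Otto--Reznikoff criterion then yields the desired uniform LSI. The main obstacle is Step~(ii), specifically establishing the algebraic decay of $|\partial_i\partial_j\bar H|$ in $|i-j|$ uniformly over all admissible $S$ and $x^S$; everything hinges on the covariance bound \eqref{e_cond_alg_decay_OR} being uniform over arbitrary subsets and boundary conditions, which is why that assumption is formulated so strongly. Once this decay is in hand, the summation over $\Lambda_K$ and the appeal to the criterion are routine.
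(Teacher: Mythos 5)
Your proposal is correct and follows essentially the same route as the paper: apply the Otto--Reznikoff criterion on $\Lambda_K$, feed in the uniform single-site LSI from Lemma~\ref{p_crucial_lemma _1_OR}, and control the off-diagonal Hessian entries of $\bar H$ via the interaction and covariance decay so that the sum over the $K$-sublattice is $O(K^{-d-\alpha})$ and hence dominated by the single-site constant for $K$ large. The only cosmetic difference is that you establish the algebraic decay of $\partial_i\partial_j\bar H$ inline by a convolution estimate, whereas the paper isolates this as a separate lemma (its generalization of \cite[Lemma~2]{OR07}) proved by the triangle-inequality trichotomy; the two arguments are equivalent.
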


\subsection{Proof of Lemma~\ref{p_crucial_lemma _1_OR} and Lemma~\ref{p_crucial_lemma _2_OR}}\label{s_crucial_lemma_1_OR}
Let us first turn to the proof of Lemma~\ref{p_crucial_lemma _1_OR}. For the argument we need the two new ingredients. The first one is the covariance estimate of Proposition~\ref{p_algebraic_decay_correlations} from below. The second one is that the variances of our kind of Gibbs measure are uniformly bounded (see Lemma~\ref{p_est_var_ss} from below). \medskip

Let us now state the covariance estimate of Proposition~\ref{p_algebraic_decay_correlations}.

\begin{proposition}\label{p_algebraic_decay_correlations} 
Let $\Lambda \subset \mathds{Z}^d$ an arbitrary finite subset of the $d$-dimensional lattice $\mathds{Z}^d$. We consider a probability measure $d\mu:= Z^{-1} \exp (-H(x)) \ dx$ on $\mathds{R}^\Lambda$. We assume that
\begin{itemize}
\item the conditional measures $\mu(dx_i | \bar x_i )$, $i \in \Lambda $, satisfy a uniform PI with constant $\varrho_i>0$.
\item the numbers $\kappa_{ij}$, $i \neq j, i,j \in \Lambda$, satisfy
   \begin{equation*}
|\nabla_i \nabla_j H(x)|\leq \kappa_{ij}  < \infty     
   \end{equation*}
uniformly in $x \in \mathds{R}^\Lambda$. Here, $|\cdot|$ denotes the operator norm of a bilinear form. 
\item the numbers $\kappa_{ij}$ decay algebraically in the sense of
  \begin{align}
    \label{e_algeb_decay_of_kappa}
    \kappa_{ij} \lesssim \frac{1}{|i-j|^{d+\alpha} +1} 
  \end{align}
for some $\alpha>0$.
 \item the symmetric matrix $A=(A_{ij})_{N \times N}$ defined by
  \begin{equation*} 
A_{ij} =
\begin{cases}
  \varrho_i, & \mbox{if }\;  i=j , \\
  -\kappa_{ij}, & \mbox{if } \; i< j,
\end{cases}
  \end{equation*}
is strictly diagonally dominant  i.e.~for some $\delta > 0$ it holds for any $i \in \Lambda$ 
\begin{equation}\label{e_strictly_diag_dominant_A}
  \sum_{j \in \Lambda, j \neq i} |A_{ij}| + \delta \le A_{ii}. 
\end{equation}
\end{itemize}
Then for all functions $f=f(x_i)$ and $g=g(x_j)$, $i, j \in \Lambda$, 
   \begin{equation}
     \label{e_covariance_decay_algebraic}
      | \cov_{\mu}(f,g) | \lesssim (A^{-1})_{ij}   \left( \int |\nabla_i f|^2 \ d \mu \right)^{\h} \left( \int |\nabla_j g |^2 \ d \mu \right)^{\h}
   \end{equation}
and for any $i, j \in \Lambda$
\begin{align}
      \label{e_decay_M_inverse}
      |(A^{-1})_{ij}| \lesssim \frac{1}{|i-j|^{d + \tilde \alpha}+1},
    \end{align}
    for some $\tilde \alpha >0$.
  \end{proposition}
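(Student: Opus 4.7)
The proof splits into a covariance estimate \eqref{e_covariance_decay_algebraic} and the algebraic decay \eqref{e_decay_M_inverse} of the inverse matrix $A^{-1}$, which I would handle in that order.

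For the covariance estimate, my plan is to use a Helffer--Sj\"ostrand-type representation for $\cov_\mu(f,g)$. Given $g$, one solves an elliptic equation on $\mathds{R}^\Lambda$ for a vector field $\phi = (\phi_k)_{k \in \Lambda}$ so that
\begin{align*}
\cov_\mu(f,g) = \sum_{k \in \Lambda} \int \partial_k f \cdot \phi_k \, d\mu.
\end{align*}
I would then bound $\phi_k$ by iterating the single-site conditional PIs: the uniform PI constant $\varrho_k$ contributes a factor of $\varrho_k^{-1}$ on the diagonal, while the off-diagonal couplings produced by $\partial_k \partial_\ell H$ are controlled by $\kappa_{k\ell}$. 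Rewriting this as a linear inequality shows that the components of $\phi$ are dominated entrywise by $A^{-1}|\nabla g|$. Since $f$ and $g$ depend only on $x_i$ and $x_j$ respectively, only the single entry $(A^{-1})_{ij}$ survives, and Cauchy--Schwarz then produces the stated bound.

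For the algebraic decay of $A^{-1}$, I would decompose $A = D - K$ with $D = \diag(\varrho_k)$ and $K_{k\ell} = \kappa_{k\ell}$ (zero on the diagonal), and expand
\begin{align*}
A^{-1} = D^{-1}\sum_{n=0}^{\infty}(KD^{-1})^n.
\end{align*}
Strict diagonal dominance \eqref{e_strictly_diag_dominant_A} gives an operator-norm bound $\|KD^{-1}\| \le \lambda$ for some $\lambda < 1$, so the series converges geometrically. The key analytic ingredient is the convolution inequality
\begin{align*}
\sum_{k \in \mathds{Z}^d}\frac{1}{(1+|i-k|)^{d+\alpha}} \cdot \frac{1}{(1+|k-j|)^{d+\alpha}} \lesssim \frac{1}{(1+|i-j|)^{d+\alpha}},
\end{align*}
proved by splitting the sum over $k$ according to whether $|k-i|$ or $|k-j|$ is the smaller of the two. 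Iterating this bound propagates the algebraic decay of $KD^{-1}$ through the powers $(KD^{-1})^n$, yielding an entrywise bound of the form $C^n (1+|i-j|)^{-(d+\alpha)}$. Combining this with the spectral bound $\lambda^n$ via a dyadic splitting $n \le N$ versus $n > N$ (with $N$ chosen logarithmically in $|i-j|$) yields the desired algebraic decay for the full sum, at the possible cost of replacing $\alpha$ by some smaller $\tilde \alpha > 0$.

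The main technical obstacle is this last balancing step: the entrywise algebraic bound has a constant growing geometrically in $n$, so the algebraic decay is only useful while $n$ is small compared to $\log|i-j|$. One must carefully optimize the splitting between algebraic and spectral estimates to prove that the resulting exponent $\tilde\alpha$ is indeed positive and depends only on $\alpha$, $\delta$, and the constants implicit in \eqref{e_algeb_decay_of_kappa}.
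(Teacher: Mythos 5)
First, note that the paper itself does not prove Proposition~\ref{p_algebraic_decay_correlations}: it refers for the proof to the external article \cite{Cov_est}, so there is no internal argument to compare against; your proposal has to stand on its own. Your plan for the covariance bound \eqref{e_covariance_decay_algebraic} is a reasonable and fairly standard route: a Helffer--Sj\"ostrand-type representation $\cov_\mu(f,g)=\sum_k\int\partial_k f\,\phi_k\,d\mu$, an entrywise inequality of the form $\varrho_k\|\phi_k\|\le \delta_{kj}\|\partial_j g\|+\sum_{\ell\neq k}\kappa_{k\ell}\|\phi_\ell\|$ obtained from the conditional PIs, and then inversion of $A$. Do note that passing from $A v\le e_j\|\partial_j g\|$ to $v\le A^{-1}e_j\|\partial_j g\|$ uses that $A^{-1}$ has nonnegative entries, i.e.\ the M-matrix structure coming from \eqref{e_strictly_diag_dominant_A}; this should be said explicitly, but it is not a gap.

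The genuine gap is in your proof of \eqref{e_decay_M_inverse}. The balancing you describe cannot produce the claimed exponent $d+\tilde\alpha$. Quantitatively: the iterated convolution bound gives $|((KD^{-1})^n)_{ij}|\le (K_0C_\alpha)^n(1+|i-j|)^{-(d+\alpha)}$ while the norm bound gives $\lambda^n$; cutting at $N=\epsilon\log(1+|i-j|)$ (or taking geometric means term by term) yields decay $(1+|i-j|)^{-\beta}$ with at best $\beta=(d+\alpha)\,|\log\lambda|/\bigl(|\log\lambda|+\log(K_0C_\alpha)\bigr)$. This exponent is strictly smaller than $d+\alpha$, is in general smaller than $d$, and tends to $0$ as $\lambda\uparrow 1$ --- so the argument proves only \emph{some} algebraic decay, not decay of order $d+\tilde\alpha$. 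This is not a cosmetic loss: the summability over the lattice used later (e.g.\ in the proof of Lemma~\ref{p_crucial_lemma _1_OR}, where $\sum_k (M^{-1})_{jk}$-type sums must converge and produce a factor $R^{-\tilde\alpha/2}$) requires the exponent to exceed $d$. (The same issue does not arise for exponential off-diagonal decay, which is why the analogous Neumann-series argument works in the Otto--Reznikoff setting.) To obtain \eqref{e_decay_M_inverse} one needs a genuinely different ingredient, for instance a Jaffard-type inverse-closedness result: since $\alpha>0$, the class of matrices with off-diagonal decay $(1+|i-j|)^{-(d+\alpha)}$ forms a Banach algebra that is inverse-closed in $\mathcal{B}(\ell^2)$, and $\ell^2$-invertibility of $A$ (with $\|A^{-1}\|\le\delta^{-1}$ by symmetry, \eqref{e_strictly_diag_dominant_A} and Gershgorin) then gives polynomial decay of $A^{-1}$ with an exponent beyond $d$; alternatively one must reproduce the argument of \cite{Cov_est}. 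A second, minor point: your claim $\|KD^{-1}\|\le\lambda<1$ \emph{uniformly} needs a short justification, since \eqref{e_strictly_diag_dominant_A} compares $\sum_{j\ne i}\kappa_{ij}$ with $\varrho_i$, not with the $\varrho_j$ appearing in $D^{-1}$, and the $\varrho_i$ are not bounded above; this can be fixed using $\sup_i\sum_j\kappa_{ij}\lesssim 1$ from \eqref{e_algeb_decay_of_kappa} together with $\varrho_i\ge\delta$ (or by first capping the diagonal, using entrywise monotonicity of inverses of M-matrices), but as written it is asserted rather than proved.
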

For the proof of Proposition~\ref{p_algebraic_decay_correlations} we refer the reader to the article~\cite{Cov_est}.

\begin{proof}[Proof of Lemma~\ref{p_crucial_lemma _1_OR}]

The strategy is to show that the Hamiltonian~$\bar H_i (  x_i )$ of the single-site conditional measure~$  \bar \mu (dx_i | x^S)$ is perturbed strictly- convex in the sense that there exists a splitting
\begin{align}
  \label{e_decom_single_site_hamitlonian}
  \bar H_i (  x_i) = \tilde \psi_i^c (x_i) + \tilde \psi_i^b (x_i)
\end{align}
into the sum of two functions $\tilde \psi_i^c (x_i)$ and $\tilde \psi_i^b (x_i)$ satisfying 
\begin{align}
  \label{e_ssp_perturbed_strictly_convex}
  \tilde (\psi_i^c)'' (x_i) \geq c >0 \quad \mbox{and} \quad |\tilde \psi_i^b (x_i)| \leq C < \infty 
\end{align}
uniformly in $x_i \in \mathbb{R}$, $i\in S$, $\Lambda_{\mathrm{tot}}$ and $S$.\newline
Once~\eqref{e_decom_single_site_hamitlonian} and~\eqref{e_ssp_perturbed_strictly_convex} are validated, the statement of Lemma~\ref{p_crucial_lemma _1_OR} follows simply from a combination of the criterion of Bakry-\'Emery for LSI and the Holley-Stroock perturbation principle (cf.~Appendix~\ref{s_BE_HS} and the proof of~\cite[Lemma 1]{OR07} for details).  \medskip

The aim is to decompose $\bar H_i$ such that~\eqref{e_decom_single_site_hamitlonian} and~\eqref{e_ssp_perturbed_strictly_convex} is satisfied. For that purpose, let us define the auxiliary Hamiltonian $H_{\mathrm{aux}} (x)$, $x \in \mathbb{R}^{\Lambda_{\mathrm{tot}}}$, as
\begin{align}\label{e_def_H_aux}
  H_{\mathrm{aux}} (x) = H(x) - \sum_{j: |j-i| \leq R} \psi_i^b (x_j).
\end{align}
Note that $H_{\mathrm{aux}}$ is strictly convex, if restricted to spins $x_j$ with $|i-j| \leq R$.\newline
For convenience, let us introduce the notation $S^c := \Lambda_{\mathrm{tot}} \backslash S$. The Hamiltonian $\bar H_i$ is then written as
\begin{align*}
  \bar H_i (x_{i}) & \overset{\eqref{d_coarse_grained_hamiltonian}}{=} - \log \int \exp(- H(x)) dx^{S^c} \\
  & = \underbrace{- \log \int \exp(- H_{\mathrm{aux}}(x)) dx^{S^c}}_{=: \tilde \psi_i^c (x_i)}\\
  & \qquad \underbrace{- \log \frac{\int \exp(- H(x)) dx^{S^c}}{\int \exp(- H_{\mathrm{aux}}(x)) dx^{S^c}}}_{=: \tilde \psi_i^b (x_i) }.
\end{align*}
Now, let us check that the functions $\tilde \psi_i^c (x_i)$ and $\tilde \psi_i^b (x_i)$ defined by the last identity satisfy the structural condition~\eqref{e_ssp_perturbed_strictly_convex}. \medskip

Let us consider first the function $\tilde \psi_i^b (x_i)$.  We introduce the auxiliary measure $\mu_{\mathrm{aux}}$ by
\begin{align*}
  \mu_{\mathrm{aux}} (dx^{S^c}) = \frac{1}{Z} \exp \left( - H_{\mathrm{aux}} (x) \right) dx^{S^c}.
\end{align*}
Then it follows from the definition~\eqref{e_def_H_aux} of $H_{\mathrm{aux}}$ that
\begin{align*}
  \left| \tilde \psi_i^b (x_i) \right| & \leq \left| \log \int \exp (- \sum_{j:|j-i| \leq R} \psi_i^b (x_j)) \ \mu_{\mathrm{aux}} (dx^{S^c}) \right| \\
  & \leq \sum_{j:|j-i|\leq R} \| \psi_i^b \|_{\infty} \leq 2(R+1)^d C. 
\end{align*}

It is now left to show that~$\tilde \psi_i^c (x_i)$ is uniformly strictly convex. Direct calculation yields
\begin{align}
  \label{e_second_deriv_tilde_psi_c}
  \frac{d^2}{dx_i^2} \tilde \psi_i^c (x_i) &= \int \frac{d^2}{dx_i^2} H_{\mathrm{aux}} (x) \mu_{\mathrm{aux}} - \var_{\mu_{\mathrm{aux}}} \left( \frac{d}{dx_i} H_{\mathrm{aux}} (x) \right) .
\end{align}
We decompose the measure $\mu_{\mathrm{aux}}$ into
\begin{align*}
&  \mu_{\mathrm{aux}} ( dx^{S^c}) \\
 & \quad = \mu_{\mathrm{aux}} \left( (dx_j)_{j \in S^c, |j-i| \leq R} \ | \ (x_j)_{j \in S^c, |j-i|> R}  \right ) \bar \mu_{\mathrm{aux}} ( (dx_j)_{j \in S^c, |j-i|> R}).
\end{align*}
Here, $\mu_{\mathrm{aux}} \left( (dx_j)_{j \in S^c, |j-i| \leq R} \ | \ (x_j)_{j \in S^c, |j-i | > R}  \right )$ denotes the conditional measure given by
\begin{align*}
 & \mu_{\mathrm{aux}} \left(  (dx_j)_{j \in S^c, |j-i| \leq R} \ | \ (x_j)_{j \in S^c, |j-i|> R}  \right ) \\
 & \qquad = \frac{1}{Z} \exp\left( - H_{\mathrm{aux}} (x) \right) \   \otimes_{\substack{j \in S^c, \\ |j-i| \leq R}}  dx_j ,
\end{align*}
whereas $\bar \mu_{\mathrm{aux}} ( (dx_j)_{j \in S^c, |j-i|> R})$ denotes the marginal measure given by
\begin{align*}
&  \bar \mu_{\mathrm{aux}} ( (dx_j)_{j \in S^c, |j-i|> R }) \\ & \qquad = \frac{1}{Z} \left( \int \exp\left( - H_{\mathrm{aux}} (x) \right) \   \otimes_{\substack{l \in  S^c, \\ |l-i| \leq R}}  dx_l \right) \ \ \otimes_{\substack{j \in S^c, |j-i|> R }}  dx_j  .
\end{align*}
For convenience, we write $\mu_{\mathrm{aux},c}$ instead of the conditional measure $\mu_{\mathrm{aux}} \left(  (dx_j)_{j \in S^c, |j-i| \leq R} \ | \ (x_j)_{j \in S^c, |j-i|> R}  \right )$.
Applying the decomposition to~\eqref{e_second_deriv_tilde_psi_c} yields
\begin{align}
  \frac{d^2}{dx_i^2} \tilde \psi_i^c (x_i) &= \int \left(  \int  \frac{d^2}{dx_i^2} H_{\mathrm{aux}} (x) \mu_{\mathrm{aux},c}   - \var_{\mu_{\mathrm{aux},c}} \left( \frac{d}{dx_i} H_{\mathrm{aux}} (x) \right)  \right)  \  \bar \mu_{\mathrm{aux}} \notag \\
& \qquad  - \var_{\bar \mu_{\mathrm{aux}}} \left( \int \frac{d}{dx_i} H_{\mathrm{aux}} (x) \mu_{\mathrm{aux},c}\right). \label{e_decomp_desintegration}
\end{align}
The first term on the right hand side of the last identity is controlled easily. Note that the Hamiltonian $H_{\mathrm{aux}}$ is strictly-convex, if restricted to spins $x_j$ with $|j-i|\leq R$. So it follows from a standard argument based on the Brascamp lieb inequality that (for details see for example~\cite[Chapter 3]{Dizdar})
\begin{align*}
   \int  \frac{d^2}{dx_i^2} H_{\mathrm{aux}} (x) \mu_{\mathrm{aux},c}   - \var_{\mu_{\mathrm{aux},c}} \left( \frac{d}{dx_i} H_{\mathrm{aux}} (x) \right)  \geq c >0
\end{align*}
uniformly in $R$ and therefore also
\begin{align*}
  \int \left(  \int  \frac{d^2}{dx_i^2} H_{\mathrm{aux}} (x) \mu_{\mathrm{aux},c}   - \var_{\mu_{\mathrm{aux},c}} \left( \frac{d}{dx_i} H_{\mathrm{aux}} (x) \right)  \right)  \  \bar \mu_{\mathrm{aux}} \geq c >0 
\end{align*}
uniformly in $R$. \medskip

Let us now turn to the second term in~\eqref{e_decomp_desintegration}. Straightforward calculation yields
\begin{align*}
  \frac{d}{dx_i} H_{\mathrm{aux}} (x) & = (\psi_i^c)' (x_i) + M_{ii} x_i + s_i + \frac{1}{2}  \sum_{j \in \Lambda_{\mathrm{aux}}} M_{ij} x_j.
\end{align*}
Because the measures $\bar \mu_{\mathrm{aux}}$ and $\mu_{\mathrm{aux},c}$ live on a subset of $S^c$, $i \in S$, and the variance is invariant under adding constants, we have 
\begin{align}
&  \var_{\bar \mu_{\mathrm{aux}}} \left( \int \frac{d}{dx_i} H_{\mathrm{aux}} (x) \mu_{\mathrm{aux},c}\right) = \var_{\bar \mu_{\mathrm{aux}}} \left( \frac{1}{2} \int \sum_{j \in S^c} M_{ij} x_j \  \mu_{\mathrm{aux},c}\right) \notag \\
  & \quad = \frac{1}{4}\var_{\bar \mu_{\mathrm{aux}}} \left( \sum_{\substack{j \in S^c, \\ |j-i| > R}} M_{ij} x_j \right)  \\
& \qquad + \frac{1}{4}\var_{\bar \mu_{\mathrm{aux}}} \left( \int \sum_{\substack{j \in S^c , \\ |j-i| \leq R}} M_{ij} x_j \  \mu_{\mathrm{aux},c}\right). \label{e_decomp_var_crucail_lemma_1_OR}
\end{align}
The first summand on the right hand side of the last identity is estimated in a straightforward manner i.e.
\begin{align*}
&  \var_{\bar \mu_{\mathrm{aux}}} \left( \sum_{\substack{j \in S^c, \\ |j-i| > R}} M_{ij} x_j \right) \\
& = \var_{ \mu_{\mathrm{aux}}} \left( \sum_{\substack{j \in S^c, \\ |j-i| > R}} M_{ij} x_j \right) \\
 & =  \sum_{\substack{j \in S^c, \\ |j-i| > R}} M_{ij} \sum_{\substack{l \in S^c, \\ |l-i| > R}} M_{il} \cov_{\mu_{\mathrm{aux}}} (x_j,x_l) \\
& \leq \sum_{\substack{j \in S^c, \\ |j-i| > R}} \sum_{\substack{l \in S^c, \\ |l-i| > R}} M_{ij}  M_{il} \left( \var_{\mu_{\mathrm{aux}}} (x_j) \right)^{\frac{1}{2}}  \left( \var_{\mu_{\mathrm{aux}}} (x_l) \right)^{\frac{1}{2}}\\
& \overset{\eqref{e_est_ss_var}}{\leq}  C \sum_{\substack{j \in S^c, \\ |j-i| > R}} \sum_{\substack{l \in S^c, \\ |l-i| > R}} M_{ij}  M_{il} \\
& \overset{~\eqref{e_cond_inter_alg_decay_OR}}{\leq}  C \sum_{\substack{j \in S^c, \\ |j-i| > R}} \sum_{\substack{l \in S^c, \\ |l-i| > R}} \frac{1}{|i-j|^{d + \alpha} +1 } \ \frac{1}{|i-l|^{d + \alpha} +1 }   \\
& \leq C \frac{1}{R^{\frac{\alpha}{2}}}.
\end{align*}
Here we have used one of the new ingredients, namely the uniform estimate~\eqref{e_est_ss_var} stated in Lemma~\ref{p_est_var_ss} from below. Note that Lemma~\ref{p_est_var_ss} also applies to the measure~$\mu_{\mathrm{aux}}$ because~$\mu_{\mathrm{aux}}$ satisfies the same structural assumptions as the measure~$\mu_{\Lambda}$. \newline
Let us consider now the second summand on the right hand side of~\eqref{e_decomp_var_crucail_lemma_1_OR}. By doubling the variables we get
\begin{align*}
 & \var_{\bar \mu_{\mathrm{aux}}} \left( \int \sum_{\substack{j \in S^c , \\ |j-i| \leq R}} M_{ij} x_j \  \mu_{\mathrm{aux},c}\right) \\
 & \quad = \int \Big( \int \sum_{\substack{j \in S^c , \\ |j-i| \leq R}} M_{ij} x_j \  \mu_{\mathrm{aux},c} (dx| y) \\
& \quad \qquad - \int \sum_{\substack{j \in S^c , \\ |j-i| \leq R}} M_{ij} x_j \  \mu_{\mathrm{aux},c}  (dx| \bar y )\Big)^2 \bar \mu_{\mathrm{aux}} (dy)  \bar \mu_{\mathrm{aux}} (d \bar y)
\end{align*}
By interpolation we have
\begin{align*}
&   \int \sum_{\substack{j \in S^c , \\ |j-i| \leq R}} M_{ij} x_j \  \mu_{\mathrm{aux},c} (dx| y)- \int \sum_{\substack{j \in S^c , \\ |j-i| \leq R}} M_{ij} x_j \  \mu_{\mathrm{aux},c}  (dx| \bar y )  \\ 
& \quad = \int_0^1 \frac{d}{dt} \int \sum_{\substack{j \in S^c , \\ |j-i| \leq R}} M_{ij} x_j \  \mu_{\mathrm{aux},c} (dx| ty + (1-t) \bar y) \ dt \\
& \quad = \int_0^1 \cov_{\mu_{\mathrm{aux},c} (dx| ty + (1-t) \bar y)} \left( \sum_{\substack{j \in S^c , \\ |j-i| \leq R}} M_{ij} x_j, \sum_{\substack{k,l \in S^c , \\ |k-i| \leq R \\ |l-i| \geq R }} x_k M_{kl} (\bar y_l - y_l) \right) \ dt \\
& \quad = \int_0^1 \sum_{\substack{j \in S^c , \\ |j-i| \leq R}} M_{ij} M_{kl} (\bar y_l - y_l) \sum_{\substack{k,l \in S^c , \\ |k-i| \leq R \\ |l-i| \geq R }}  \cov_{\mu_{\mathrm{aux},c} (dx| ty + (1-t) \bar y)} \left(  x_j,  x_k  \right) \ dt.
\end{align*}
Without loss of generality we may assume that the interaction is ferromagnetic i.e.~$M_{kl} \leq 0$ for all $k \neq l$ (else use $M_{kl}\leq|M_{kl}|$ and Lemma~\ref{p:attractive_interact_dominates}). Note that the measure $\mu_{\mathrm{aux},c}$ has strictly convex single-site potentials. Therefore the single-site conditional measures $\mu (dx_1| x)$ satisfy a LSI with constant $\frac{1}{2}M_{ii}$ by the Bakry-\'Emery criterion (see Theorem~\ref{local:thm:BakryEmery}). Because the interaction is strictly-diagonally dominant in the sense of~\eqref{e_strictly_diag_dominant}, an application of Proposition~\ref{p_algebraic_decay_correlations} yields that the covariance can be estimated as
  \begin{align*}
    \cov_{\mu_{\mathrm{aux},c} (dx| ty + (1-t) \bar y)} \left(  x_j,  x_k  \right) \leq (M^{-1})_{jk},
  \end{align*}
where the matrix $M$ is given by the elements
\begin{align*}
  M_{ln} \quad \mbox{for} \quad  l , n \in S^c , |l-i| \leq R, |k-i| \leq R  \quad \mbox{or} \quad l=n=i. 
\end{align*}
We want to note that by an simple standard result (see for example~\cite[Lemma 5]{OR07}) or~\cite[Lemma 4.3]{MN}) it holds $(M^{-1})_{kl} \geq 0$ for all $k,l$. Using this information, we get by an application of Jensen's inequality that
  \begin{align*}
 & \var_{\bar \mu_{\mathrm{aux}}} \left( \int \sum_{\substack{j \in S^c , \\ |j-i| \leq R}} M_{ij} x_j \  \mu_{\mathrm{aux},c}\right) \\
& \quad \leq C  \int_0^1 \sum_{\substack{j \in S^c , \\ |j-i| \leq R}}  \sum_{\substack{k,l \in S^c , \\ |k-i| \leq R \\ |l-i| \geq R }}  M_{ij} M_{kl}   (M^{-1})_{jk} \int (\bar y_l - y_l)^2 \ \bar \mu_{\mathrm{aux}} (dy) \bar \mu_{\mathrm{aux}} (d \bar y)\ dt \ \\
& \quad \leq C   \sum_{\substack{j \in S^c , \\ |j-i| \leq R}}  \sum_{\substack{k,l \in S^c , \\ |k-i| \leq R \\ |l-i| \geq R }}  M_{ij} M_{kl}   (M^{-1})_{jk} \var_{\bar \mu_{\mathrm{aux}}}( y_l)\\
& \quad \leq C  \sum_{\substack{j \in S^c , \\ |j-i| \leq R}} \sum_{\substack{k,l \in S^c , \\ |k-i| \leq R \\ |l-i| \geq R }}  M_{ij} M_{kl}  (M^{-1})_{jk} \var_{ \mu_{\mathrm{aux}}}( y_l)\\
& \quad \overset{\eqref{e_est_ss_var}}{\leq} C   \sum_{\substack{j \in S^c , \\ |j-i| \leq R}}  \sum_{\substack{k,l \in S^c , \\ |k-i| \leq R \\ |l-i| \geq R }}  M_{ij} M_{kl}  (M^{-1})_{jk} \\
& \quad \overset{~\eqref{e_cond_inter_alg_decay_OR}, \eqref{e_decay_M_inverse}}{\leq} C   \sum_{\substack{j \in S^c , \\ |j-i| \leq R}}  \sum_{\substack{k,l \in S^c , \\ |k-i| \leq R \\ |l-i| \geq R }} \frac{1}{|i-j|^{d+\alpha} +1} \ \frac{1}{|k-l|^{d+\alpha} +1} \frac{1}{|j-k|^{d+\tilde \alpha} +1} \\
& \quad \leq \frac{C}{R^{\frac{\tilde \alpha}{{2}}}} 
\end{align*}
Note that here we also used the second ingredient, namely the covariance estimates~\eqref{e_covariance_decay_algebraic} and~\eqref{e_decay_M_inverse}. 
Hence, both terms on the right hand side of~\eqref{e_decomp_var_crucail_lemma_1_OR} are arbitrarily small, if we choose $R$ big enough. Overall this leads to the desired statement (cf.~\eqref{e_decomp_desintegration} ff.)
\begin{align*}
  \frac{d^2}{dx_i^2} \tilde \psi_i^c (x_i) \geq c >0,
\end{align*}
which completes the argument.
\end{proof}

In the proof of Lemma~\ref{p_crucial_lemma _1_OR}, we needed the following auxiliary statement.
\begin{lemma}\label{p_est_var_ss}
  Under the same assumptions as in Lemma~\ref{p_crucial_lemma _1_OR}, it holds that for all $i \in \Lambda$
  \begin{align}\label{e_est_ss_var}
    \var_{\mu_\Lambda} (x_i) \leq C,
  \end{align}
where the bound is uniform in $\Lambda$ and only depends on the constants appearing in~\eqref{e_cond_psi} and in~\eqref{e_strictly_diag_dominant}. 
\end{lemma}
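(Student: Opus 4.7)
My plan is to derive the uniform variance bound via the integration-by-parts identity $\int f\, \nabla_i H\, d\mu_\Lambda = \int \nabla_i f\, d\mu_\Lambda$ applied to the centered test function $f(x) = x_i - m_i$ with $m_i := \int x_i\, d\mu_\Lambda$. Chebyshev's monotonicity correlation inequality will neutralize the contribution of the convex part of $\psi_i$, and the strict diagonal dominance of $M$ will then close a self-referential inequality for $v^* := \sup_{i \in \Lambda} v_i$, where $v_i := \var_{\mu_\Lambda}(x_i)$.

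As a preliminary observation, strict diagonal dominance with positive diagonal yields $M \geq \delta\, \Id$ (Gershgorin), so $H$ is coercive on $\mathds{R}^\Lambda$ and $v^*$ is finite for each fixed finite $\Lambda$. Since the linear boundary contribution $\sum_{j \in \mathds{Z}^d \setminus \Lambda} M_{ij} x_j$ appearing inside $\nabla_i H$ is a constant and integrates against $x_i - m_i$ to zero, the integration-by-parts identity reduces to
\begin{equation*}
  1 = \int (x_i - m_i)\, \psi_i'(x_i)\, d\mu_\Lambda + M_{ii} v_i + \sum_{j \in \Lambda,\ j \neq i} M_{ij}\, \cov_{\mu_\Lambda}(x_i, x_j).
\end{equation*}

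The key step is to split $\psi_i' = (\psi_i^c)' + (\psi_i^b)'$ and to exploit that both $x_i \mapsto x_i - m_i$ and $x_i \mapsto (\psi_i^c)'(x_i)$ are non-decreasing functions of $x_i$. Chebyshev's correlation inequality, applied to the marginal law of $x_i$ under $\mu_\Lambda$, then yields
\begin{equation*}
  \int (x_i - m_i)(\psi_i^c)'(x_i)\, d\mu_\Lambda \;\geq\; \int (x_i - m_i)\, d\mu_\Lambda \cdot \int (\psi_i^c)'(x_i)\, d\mu_\Lambda \;=\; 0,
\end{equation*}
whereas the bounded piece satisfies $\bigl|\int (x_i - m_i)(\psi_i^b)'(x_i)\, d\mu_\Lambda\bigr| \lesssim \sqrt{v_i}$ by Cauchy--Schwarz and $\|(\psi_i^b)'\|_\infty \lesssim 1$. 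Combined with $|\cov_{\mu_\Lambda}(x_i, x_j)| \leq \tfrac{1}{2}(v_i + v_j)$ and the strict diagonal dominance $\sum_{j \neq i}|M_{ij}| \leq M_{ii} - \delta$, the identity above reorganizes into
\begin{equation*}
  \frac{M_{ii} + \delta}{2}\, v_i \;\leq\; 1 + C\sqrt{v_i} + \frac{M_{ii} - \delta}{2}\, v^*.
\end{equation*}

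To close the estimate, I would absorb $C\sqrt{v_i}$ into $\tfrac{\delta}{4}\, v_i$ via AM-GM and then take the supremum over $i \in \Lambda$. Since Definition~\ref{def:dep} guarantees $\sup_i M_{ii} \leq \sup_i \sum_j |M_{ij}| < \infty$, the prefactor $\tfrac{M_{ii} - \delta}{\delta/2 + M_{ii}}$ is bounded away from $1$ uniformly in $i$, so the resulting inequality has the shape $v^* \leq K + \theta v^*$ with $\theta < 1$ and yields a uniform bound on $v^*$ depending only on $\delta$ and $\|(\psi_i^b)'\|_\infty$, as required. I expect the Chebyshev step to be the main subtlety: without the observation that both factors are monotone in $x_i$, the integral $\int (x_i - m_i)(\psi_i^c)'(x_i)\, d\mu_\Lambda$ would have no obvious sign, since $(\psi_i^c)'(0)$ is not controlled by~\eqref{e_cond_psi}, and without this a priori non-negativity the self-bounding inequality would fail to close uniformly in the shape of $\psi_i^c$.
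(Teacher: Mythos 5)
Your argument is correct, but it takes a genuinely different route from the paper. The paper proves Lemma~\ref{p_est_var_ss} by doubling variables and symmetrizing ($x=q+p$, $y=q-p$), so that $\var_{\mu_\Lambda}(x_i)$ becomes a second moment of $p_i$ under a Gibbs measure whose convex single-site parts are even in $p$ and hence minimized at $p=0$; it then invokes the exponential moment bound of Lemma~\ref{lem:moments} (Nittka, \cite[Lemma~4.3]{MN}), which requires exactly that normalization of the minimum. You instead stay with $\mu_\Lambda$ itself and close a self-bounding inequality: the integration-by-parts identity with $f=x_i-m_i$, the sign $\cov_{\mu_\Lambda}\bigl(x_i,(\psi_i^c)'(x_i)\bigr)\ge 0$ from Chebyshev's correlation inequality applied to the one-dimensional marginal (which neatly replaces the paper's symmetrization trick for handling an unnormalized convex part), Cauchy--Schwarz for the bounded perturbation, $|\cov_{\mu_\Lambda}(x_i,x_j)|\le\tfrac12(v_i+v_j)$, and the strict diagonal dominance \eqref{e_strictly_diag_dominant} to obtain $v^*\le K+\theta v^*$ with $\theta<1$, using the a priori finiteness of $v^*$ for fixed finite $\Lambda$. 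The trade-offs: your proof is elementary and self-contained (it does not outsource the key step to \cite{MN}), but it delivers only the variance bound, whereas the paper's route yields Gaussian-type moments $\E_{\mu_\Lambda}[\e^{a p_i^2}]\lesssim 1$ and all even moments. One small caveat: your contraction factor $\theta=\sup_i\frac{M_{ii}-\delta}{M_{ii}+\delta/2}$ is strictly below $1$ only because $\sup_i M_{ii}\le\sup_i\sum_j|M_{ij}|<\infty$, so your constant depends on that upper bound as well, not only on $\delta$ and $\|(\psi_i^b)'\|_\infty$ as you state at the end; this is admissible, since the paper's own proof inherits the same dependence through the $\lesssim$ convention of Definition~\ref{def:dep}, but you should state it.
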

The proof of Lemma~\ref{p_est_var_ss} is a simple and straightforward application of a exponential moment bound due to Robin Nittka.
\begin{lemma}[\mbox{\cite[Lemma~4.3]{MN}}]\label{lem:moments}
  We assume that the formal Hamiltonian $H:\mathds{R}^{\mathds{Z}^d} \to \mathds{R} $ given by~\eqref{e_d_Hamiltonian} satisfies the Assumptions~\eqref{e_cond_psi}~-~\eqref{e_strictly_diag_dominant}.\newline
Additionally, we assume that for all $ i \in \mathds{Z}^d$ the convex part $\psi_i^c$ of the single-site potentials $\psi_i$ has a global minimum in $x_i=0$. \newline
Let $\delta >0$ be given by~\eqref{e_strictly_diag_dominant}. Then for every $0 \le a \le \frac{\delta}{2}$ and any subset $\Lambda \subset \mathds{Z}^d$ it holds
        \begin{equation} \label{e:exponential_moment}
          \E_{\mu_{\Lambda}} \bigl[\e^{a p_i^2}\bigr] \lesssim 1.
        \end{equation}
In particular, for any $k \in \mathds{N}_0$ this yields 
\begin{equation} \label{e:arbitrary_moment}
  \E_{\mu_{\Lambda}}[p_i^{2k}] \lesssim k!.
\end{equation}
\end{lemma}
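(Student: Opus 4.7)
The plan is to first prove the polynomial moment bound~\eqref{e:arbitrary_moment} by integration by parts (IBP) and induction on $k$, and then derive the exponential moment bound~\eqref{e:exponential_moment} by Taylor expansion. The IBP approach is natural because it connects $\E[p_i^{2k}]$ to lower-order moments through the gradient of $H$, whose structure is dictated precisely by the hypotheses \eqref{e_cond_psi}--\eqref{e_strictly_diag_dominant}.

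\textbf{Step 1 (moment recursion).} The starting point is the identity
\[
\int \partial_{p_i}\bigl(p_i^{2k-1}\,e^{-H(x)}\bigr)\,dx \;=\; 0,
\]
which rearranges to $(2k-1)\,\E_{\mu_\Lambda}[p_i^{2k-2}] = \E_{\mu_\Lambda}[p_i^{2k-1}\,\partial_i H(x)]$. Expanding $\partial_i H = (\psi_i^c)'(p_i) + (\psi_i^b)'(p_i) + M_{ii}\,p_i + \sum_{j \ne i} M_{ij}\,p_j + b_i$, where $b_i$ absorbs the boundary contribution, I exploit three structural features. (i) Since $\psi_i^c$ is convex with global minimum at $0$, its derivative has the same sign as $p_i$, so $p_i^{2k-1}(\psi_i^c)'(p_i) \ge 0$ and this nonnegative term is dropped to obtain an inequality. (ii) The bounded-derivative perturbation contributes $|p_i^{2k-1}(\psi_i^b)'(p_i)| \le \|(\psi_i^b)'\|_\infty\,|p_i|^{2k-1}$, which sits in $\lesssim 1$ by Definition~\ref{def:dep}. (iii) H\"older yields $\E[|p_i|^{2k-1}|p_j|] \le W_k^{(2k-1)/(2k)}\,W_k^{1/(2k)} = W_k$, where $W_k := \sup_{i,\Lambda}\E_{\mu_\Lambda}[p_i^{2k}]$ (a priori finite for finite $\Lambda$ since $\mu_\Lambda$ has Gaussian-like tails).

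\textbf{Step 2 (closing via diagonal dominance).} Combining (i)--(iii) and evaluating at the index achieving $W_k$ produces
\[
M_{ii}\,W_k - W_k \sum_{j \ne i}|M_{ij}| \;\lesssim\; (2k-1)\,W_{k-1} + W_k^{(2k-1)/(2k)}.
\]
Strict diagonal dominance $M_{ii} - \sum_{j \ne i}|M_{ij}| \ge \delta$ yields the net estimate $\delta\,W_k \lesssim (2k-1)\,W_{k-1} + W_k^{(2k-1)/(2k)}$, and absorbing the sublinear term on the right via Young's inequality $a^{(2k-1)/(2k)} \le \epsilon a + C_\epsilon$ gives the closed recursion $W_k \lesssim k\,W_{k-1} + 1$. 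Iterating, $W_k \lesssim k!$ uniformly in $\Lambda$, establishing~\eqref{e:arbitrary_moment}.

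\textbf{Step 3 (exponential moment and main obstacle).} Taylor expansion gives
\[
\E_{\mu_\Lambda}\bigl[e^{a p_i^2}\bigr] \;=\; \sum_{k \ge 0}\frac{a^k}{k!}\,\E_{\mu_\Lambda}[p_i^{2k}] \;\lesssim\; \sum_{k \ge 0}(aC)^k,
\]
a uniform constant whenever $a < 1/C$, which is~\eqref{e:exponential_moment}. The main obstacle is matching the sharp endpoint $a \le \delta/2$ asserted in the lemma: the constant $C$ emerging from my iteration needs to be sharpened to exactly $2/\delta$, which requires tracking Young-inequality constants with care in Step~2 and likely a finer exploitation of the convexity of $\psi_i^c$ beyond the crude sign bound $p_i(\psi_i^c)'(p_i) \ge 0$ used here. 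A secondary subtlety is the boundary term $b_i$, which must either be controlled by the hypothesized temperedness of $x^{\Lambda^c}$ or eliminated by shifting $p_i \mapsto p_i - \E_{\mu_\Lambda}[p_i]$ before applying IBP, so that $b_i$ is absorbed into the (boundary-dependent) mean and only centered moments enter the recursion.
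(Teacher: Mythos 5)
The paper never proves this statement: it is quoted verbatim from \cite{MN} (Lemma~4.3 there), with only the remark that the one-dimensional proof carries over to $\mathds{Z}^d$, so your argument has to stand on its own. Its skeleton --- the integration-by-parts identity, dropping $p_i^{2k-1}(\psi_i^c)'(p_i)\ge 0$ using convexity and the minimum at zero, H\"older for the off-diagonal terms, and strict diagonal dominance at a site maximizing the $2k$-th moment --- is sound and does yield a usable recursion. But two steps fall short of the lemma as stated. First, in the Young absorption $C\,W_k^{(2k-1)/(2k)}\le \epsilon W_k + C_\epsilon$ the constant $C_\epsilon$ is of order $(C/\epsilon)^{2k}$, so the honest output of your recursion is $W_k\le C^k k!$, not $W_k\lesssim k!$ as you claim. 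This still sums in Step~3 and gives \eqref{e:exponential_moment} for all $a$ below some $a_0>0$ depending only on the admissible constants --- which is all the present paper needs, since Lemma~\ref{p_est_var_ss} only uses the second moment --- but it does not reach the asserted range $0\le a\le \delta/2$, and that explicit threshold is precisely the quantitative content of \eqref{e:exponential_moment}. You acknowledge this, but ``tracking Young constants with care'' is not a repair: a polynomial-moment recursion loses a multiplicative constant at every step and will not self-improve to $\delta/2$; the natural way to get the endpoint is to work directly with the exponential moment, absorbing the tilt $a p_i^2$ into the diagonal of $M$ (for $a\le\delta/2$ the tilted matrix is still strictly diagonally dominant), rather than resumming moments.

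Second, the linear term $b_i$ is a genuine unresolved issue, and neither of your two suggested fixes works as stated. Temperedness of $x^{\mathds{Z}^d\backslash\Lambda}$ gives only finiteness of $b_i=\sum_{j\notin\Lambda}M_{ij}x_j$, not a bound in terms of the quantities allowed by Definition~\ref{def:dep}; with a large field the claimed uniform bound is simply false, since the measure recenters at distance of order $|b_i|$. Centering $p_i\mapsto p_i-\E_{\mu_\Lambda}[p_i]$ destroys the ``global minimum at $0$'' structure of $\psi_i^c$ and requires an a priori bound on $\E_{\mu_\Lambda}[p_i]$, which is the same difficulty in disguise. The resolution in the paper's application is structural: after doubling and the change of variables in the proof of Lemma~\ref{p_est_var_ss}, the conditional $p$-measure \eqref{eq:def_mu_q} has a Hamiltonian that is even in each $p_k$, so no linear or boundary term in $p$ survives and the minimum-at-zero hypothesis is automatic; your argument should be formulated and proved for zero (or uniformly bounded) external field and invoked in that form. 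Finally, a small repair: define $W_k$ as the maximum over $i\in\Lambda$ for a fixed finite $\Lambda$ (so it is finite and attained) and note at the end that the constants in the recursion do not depend on $\Lambda$; taking the supremum over $\Lambda$ inside the definition, as you wrote, presupposes the very uniformity you are trying to prove.
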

The statement of Lemma~\ref{lem:moments} is a slight improvement of~\cite[Section~3]{BHK82}, because the assumptions are slightly weaker compared to~\cite {BHK82}. More precisely, $\psi_i''$ may change sign outside every compact set and there is no condition on the signs of the interaction. Even if \cite[Lemma~4.3]{MN} is formulated in~\cite{MN} for systems on an one-dimensional lattice, a simple analysis of the proof shows that the statement is also true on lattices of any dimension. 
\begin{proof}[Proof of Lemma~\ref{p_est_var_ss}]
By doubling the variables we get 
\begin{equation*}
  \var_{\mu_{\Lambda}}(x_i) = \frac{1}{2} \int \int (x_i -y_i)^2  \mu_{\Lambda}(dx) \mu_{\Lambda} (dy).
\end{equation*}
By the change of coordinates $x_k= q_k + p_k$ and $y_k= q_k -p_k$ for all $k \in \Lambda$, the last identity yields by using the definition~\eqref{e_d_Gibbs_measure} of the finite-volume Gibbs measure $\mu_{\Lambda}$ that
\begin{align*}
  \var_{\mu_{\Lambda}}(x_i) &= C \int \int p_i^2  \  \underbrace{\frac{e^{-H(q^{\Lambda}+p^{\Lambda}, x^{\mathds{Z}\backslash \Lambda}) - H(q^{\Lambda}-p^{\Lambda}, x^{\mathds{Z}\backslash \Lambda})}}{\int e^{-H(q^{\Lambda}+p^{\Lambda}, x^{\mathds{Z}\backslash \Lambda})  -H(q^{\Lambda}-p^{\Lambda}, x^{\mathds{Z}\backslash \Lambda})}  \dx p^{\Lambda} \dx q^{\Lambda} }   \dx p^{\Lambda} \dx q^{\Lambda} }_{=: d \tilde \mu_{\Lambda} (q^{\Lambda},p^{\Lambda})} . 
\end{align*}
By conditioning on the values $q^{\Lambda}$ it directly follows from the definition~\eqref{e_d_Hamiltonian} of $H$ that
\begin{equation} \label{e:repre_covariance}
  \var_{\mu_{\Lambda}}(x_i) = C
 \mathds{E}_{\tilde \mu_{\Lambda}} \left[ \mathds{E}_{\mu_{\Lambda,q}} \left[ p_i^2 \right] \right].
\end{equation}
Here, the conditional measure $\mu_{\Lambda,q}$ is given by the density
\begin{equation}
  \label{eq:def_mu_q}
	\dx\mu_{\Lambda,q}(p^\Lambda) \coloneqq \frac{1}{Z_{\mu_{\Lambda,q}}} \e^{-\sum_{k \in \Lambda} \psi_{k,q}(p_k) - \sum_{k,l \in \Lambda} M_{kl} p_k p_l} \dx p^\Lambda  
\end{equation}
with single-site potentials $\psi_{k,q} \coloneqq \psi_{k,q}^c + \psi_{k,q}^b$ defined by 
\begin{align*}
   \psi_{k,q}^c(p_k) & \coloneqq \psi_k^c(q_k + p_k) + \psi_k^c(q_k - p_k) \qquad \mbox{and} \\
  \psi_{k,q}^b(p_k) & \coloneqq \psi_k^b(q_k + p_k) + \psi_k^b(q_k - p_k).  
\end{align*}
Because of symmetry in the variable $p_k$,the convex part of the single-site potential $\psi_{k,q}^c(p_k)$ has a global minimum at $p_k=0$ for any $k$. Therefore, an application of Lemma~\ref{lem:moments} yields the desired statement. \end{proof}

Let us turn to the verification of Lemma~\ref{p_crucial_lemma _2_OR}. We also need an auxiliary statement, namely Lemma~\ref{p_aux_lemma_2_OR} from below.
It is a generalization of~\cite[Lemma 2]{OR07} and states that the interactions of the Hamiltonian $\bar H ((x_i)_{i \in S})$ given by~\eqref{d_coarse_grained_hamiltonian} decay sufficiently fast.
\begin{lemma} [Generalization of~\mbox{\cite[Lemma 2]{OR07}}]\label{p_aux_lemma_2_OR}
In the same situation as in Lemma~\ref{p_crucial_lemma _1_OR}, the interactions of $\bar H (x^S)$ decay algebraically i.e. there are constants $0, \varepsilon, C < \infty$  such that
\begin{align*}
  \left|\frac{d}{dx_i} \frac{d}{dx_j}  \bar H \right| \leq C \frac{1}{|i-j|^{d+\bar \varepsilon}+1}
\end{align*} 
uniformly in $i,j \in S$.
\end{lemma}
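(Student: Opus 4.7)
\textbf{Proof plan for Lemma~\ref{p_aux_lemma_2_OR}.} Write $\bar S := \Lambda_{\mathrm{tot}} \setminus S$ and denote by $\mu_{\bar S}(dx^{\bar S} \mid x^S)$ the conditional measure with density proportional to $\exp(-H(x^S, x^{\bar S}))$. This is itself a finite-volume Gibbs measure on a subset of $\Lambda_{\mathrm{tot}}$ with a tempered state (namely $x^S$ together with the given data outside $\Lambda_{\mathrm{tot}}$), so the correlation decay assumption~\eqref{e_cond_alg_decay_OR} applies uniformly to it. Differentiating twice under the integral sign gives, for $i \neq j$ in $S$,
\begin{equation*}
  \partial_{x_i}\partial_{x_j}\bar H(x^S)
  = \int \partial_{x_i}\partial_{x_j} H \, d\mu_{\bar S}(\cdot \mid x^S)
    - \cov_{\mu_{\bar S}(\cdot\mid x^S)}\!\left(\partial_{x_i}H,\partial_{x_j}H\right).
\end{equation*}
Since $\partial_{x_i}\partial_{x_j}H = M_{ij}$ and $\partial_{x_i}H = \psi_i'(x_i) + \sum_k M_{ik}x_k$, with only the $x_k$ for $k \in \bar S$ being random under $\mu_{\bar S}(\cdot\mid x^S)$, this reduces to
\begin{equation*}
  \partial_{x_i}\partial_{x_j}\bar H(x^S)
  = M_{ij} \;-\; \sum_{k,l\in \bar S} M_{ik} M_{jl}\,\cov_{\mu_{\bar S}(\cdot\mid x^S)}(x_k,x_l).
\end{equation*}

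Next I plug in the algebraic bounds. Assumption~\eqref{e_cond_inter_alg_decay_OR} gives $|M_{ab}| \lesssim 1/(|a-b|+1)^{d+\alpha}$, and assumption~\eqref{e_cond_alg_decay_OR} applied to the conditional Gibbs measure gives $|\cov_{\mu_{\bar S}(\cdot\mid x^S)}(x_k,x_l)| \lesssim 1/(|k-l|+1)^{d+\alpha}$, uniformly in the conditioning. Taking absolute values and extending the sum from $\bar S$ to all of $\mathds{Z}^d$ yields
\begin{equation*}
  |\partial_{x_i}\partial_{x_j}\bar H|
  \;\lesssim\; \frac{1}{(|i-j|+1)^{d+\alpha}}
  + \sum_{k,l \in \mathds{Z}^d} \frac{1}{(|i-k|+1)^{d+\alpha}}\cdot\frac{1}{(|k-l|+1)^{d+\alpha}}\cdot\frac{1}{(|j-l|+1)^{d+\alpha}}.
\end{equation*}

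The double sum on the right is a threefold convolution of $f(z) := (|z|+1)^{-(d+\alpha)}$ evaluated at $i-j$. The main technical step is therefore the convolution estimate: for any $\alpha > 0$, one has $(f*f)(z) \lesssim (|z|+1)^{-(d+\alpha')}$ for some $0 < \alpha' \leq \alpha$, and iterating once more gives $(f*f*f)(z) \lesssim (|z|+1)^{-(d+\bar\varepsilon)}$ for some $\bar\varepsilon > 0$. The standard proof splits the convolution sum into the regions $\{|y| \leq |z|/2\}$ and $\{|y| > |z|/2\}$ and uses that in each region one factor is controlled by $|z|^{-(d+\alpha)}$ while the remaining factor is summable thanks to $\alpha > 0$. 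Combining this with the pointwise bound on $|M_{ij}|$ produces the desired estimate
\begin{equation*}
  \bigl|\partial_{x_i}\partial_{x_j}\bar H\bigr| \;\lesssim\; \frac{1}{(|i-j|+1)^{d+\bar\varepsilon}}
\end{equation*}
uniformly in $i,j \in S$ and in $x^S$, $\Lambda_{\mathrm{tot}}$, $S$.

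The main obstacle is the convolution estimate: while folklore, care is needed to ensure that the loss in the exponent after two convolutions is not so large as to destroy summability, i.e.\ that $\bar\varepsilon > 0$ can indeed be guaranteed from the given $\alpha > 0$. The rest of the argument is a routine differentiation of the logarithm of a partition function followed by the application of the two standing assumptions~\eqref{e_cond_inter_alg_decay_OR} and~\eqref{e_cond_alg_decay_OR}.
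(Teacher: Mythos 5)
Your proposal is correct and follows essentially the same route as the paper: the same second-derivative formula $\partial_{x_i}\partial_{x_j}\bar H = M_{ij} - \sum_{k,l\in \Lambda_{\mathrm{tot}}\setminus S} M_{ik}M_{jl}\cov(x_k,x_l)$ for the conditional Gibbs measure, followed by the two decay assumptions~\eqref{e_cond_inter_alg_decay_OR} and~\eqref{e_cond_alg_decay_OR} and an elementary summation bound. Your iterated convolution estimate is just the paper's pigeonhole splitting (one of $|i-k|$, $|k-l|$, $|l-j|$ must exceed $|i-j|/3$) in different clothing, and in fact no exponent is lost: one gets the bound with $\bar\varepsilon=\alpha$, so the worry you flag about $\bar\varepsilon>0$ is not an obstacle.
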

\begin{proof}[Proof of Lemma~\ref{p_aux_lemma_2_OR}]
  Direct calculation as in~\cite[Lemma 2]{OR07} shows that 
\begin{align*}
  \frac{d}{dx_i} \frac{d}{dx_j}  \bar H = -M_{ij} - \sum_{k \in \Lambda_{\mathrm{tot}} \backslash S}  \sum_{l \in \Lambda_{\mathrm{tot}} \backslash S} M_{ik} \ M_{jl} \  \cov_{\Lambda_{\mathrm{tot} \backslash S}} (x_k, x_l).
\end{align*} 
The last identity immediately yields the estimate (cf.~\cite[(52)]{OR07})
\begin{align*}
  \left|\frac{d}{dx_i} \frac{d}{dx_j}  \bar H \right| \leq |M_{ij}| + \sum_{k \in \Lambda_{\mathrm{tot}} \backslash S}  \sum_{l \in \Lambda_{\mathrm{tot}} \backslash S} |M_{ik}| \ |M_{jl}| \ | \cov_{\Lambda_{\mathrm{tot} \backslash S}} (x_k, x_l)|
\end{align*} 
 Using the decay of interactions~\eqref{e_decay_inter_Otto} and the decay of correlations~\eqref{e_decay_corr_Otto} we get
\begin{align*}
 & \left|\frac{d}{dx_i} \frac{d}{dx_j}  \bar H \right| \\
 & \quad \leq \frac{C}{|i-j|^{d+ \alpha}} + C \sum_{k \in \Lambda_{\mathrm{tot}} \backslash S}  \sum_{l \in \Lambda_{\mathrm{tot}} \backslash S} \frac{1}{|i-k|^{d+ \alpha}}    \frac{1}{|j-l|^{d+ \alpha}}          \frac{1}{|k-l|^{d+ \alpha}} 
\end{align*} 
Now we use the same kind of argument as used in in the proof of Proposition~\ref{p_algebraic_decay_correlations} to estimate the term $T_k$. This means that for any multi-indexes $i,k,l,j \in \Lambda_{\mathrm{tot}}$ it holds either  
\begin{align*}
  |i-k|\geq \frac{C}{3} |i-j|, \quad  |j-l|\geq \frac{C}{3} |i-j|, \quad \mbox{or} \quad |k-l|\geq \frac{C}{3} |i-j|.    
\end{align*}
Therefore,  we have
\begin{align*}
& \sum_{k \in \Lambda_{\mathrm{tot}} \backslash S}  \sum_{l \in \Lambda_{\mathrm{tot}} \backslash S} \frac{1}{|i-k|^{d+ \alpha}}    \frac{1}{|j-l|^{d+ \alpha}}          \frac{1}{|k-l|^{d+ \alpha}}   \\
& \leq \sum_{ \substack{k \in \Lambda_{\mathrm{tot}} \backslash S, \\ l \in \Lambda_{\mathrm{tot}} \backslash S, \\ |i-k|\geq \frac{1}{3} |i-j|}} \frac{1}{|i-k|^{d + \alpha}}    \frac{1}{|j-l|^{d + \alpha}}          \frac{1}{|k-l|^{d + \alpha}} \\
& \qquad + \sum_{ \substack{k \in \Lambda_{\mathrm{tot}} \backslash S, \\ l \in \Lambda_{\mathrm{tot}} \backslash S, \\ |j-l|\geq \frac{1}{3} |i-j|}} \ldots + \sum_{ \substack{k \in \Lambda_{\mathrm{tot}} \backslash S, \\ l \in \Lambda_{\mathrm{tot}} \backslash S, \\ |k-l|\geq \frac{1}{3} |i-j|}} \ldots \\
& \leq C \ \frac{1}{|i-j|^{d + \alpha}}, 
\end{align*} 
which yields the desired statement of Lemma~\ref{p_aux_lemma_2_OR}.
\end{proof}
As in the proof of~\cite[Lemma~4]{OR07} we verify Lemma~\ref{p_crucial_lemma _2_OR} by an application of the Otto-Reznikoff criterion for LSI i.e.
\begin{theorem}[Otto-Reznikoff criterion for LSI,~{\mbox{\cite[Theorem 1]{OR07}}}] \label{p_otto_reznikoff}
  Let $d\mu:= Z^{-1} \exp (-H(x)) \ dx$ be a probability measure on a direct product of Euclidean spaces $X= X_1 \times \cdots \times X_N$. We assume that
\begin{itemize}
 \item the conditional measures $\mu(dx_i | \bar x_i )$, $1\leq i \leq N$, satisfy a uniform LSI($\varrho_i $).
 \item the numbers $\kappa_{ij}$, $1 \leq i \neq j \leq N$, satisfy
   \begin{equation*}
 |\nabla_i \nabla_j H(x)|\leq \kappa_{ij} < \infty     
   \end{equation*}
uniformly in $x \in X$. Here, $|\cdot|$ denotes the operator norm of a bilinear form. 
\item the symmetric matrix $A=(A_{ij})_{N \times N}$ defined by
  \begin{equation*}
A_{ij} =
\begin{cases}
  \varrho_i, & \mbox{if } \; i=j , \\
  -\kappa_{ij}, & \mbox{if } \; i < j,
\end{cases}
  \end{equation*}
satisfies in the sense of quadratic forms 
\begin{equation}\label{e_cond_OR}
   A \geq \varrho \Id \qquad \mbox{for a constant } \varrho>0.
\end{equation}
\end{itemize}
Then $\mu$ satisfies LSI($\varrho$).
\end{theorem}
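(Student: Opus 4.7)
The plan is to prove the criterion by iteratively peeling off one coordinate at a time, using an entropy decomposition combined with the single-site conditional LSIs to produce a Fisher information contribution in each direction, and reducing the problem for the marginal to one with one fewer coordinate governed by the Schur complement of $A$.

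First, I would argue by induction on $N$, after fixing an arbitrary ordering of the coordinates. The base case $N=1$ is trivial: the condition $A\geq \varrho\Id$ forces $\varrho_1\geq \varrho$, and by hypothesis $\mu$ itself satisfies LSI$(\varrho_1)$, hence LSI$(\varrho)$. For the inductive step, decompose $\mu$ by conditioning on the last coordinate:
\begin{equation*}
  \operatorname{Ent}_\mu(f) = \int \operatorname{Ent}_{\mu(\cdot\mid \bar x_N)}(f)\, d\bar\mu(\bar x_N) \;+\; \operatorname{Ent}_{\bar\mu}(\bar f_N),
\end{equation*}
where $\bar\mu$ is the marginal of $\mu$ on $X_1\times\cdots\times X_{N-1}$, $\bar f_N(\bar x_N):= \int f\, \mu(dx_N\mid \bar x_N)$, and the effective Hamiltonian of $\bar\mu$ is $\bar H(\bar x_N) = -\log\int \e^{-H}\,dx_N$. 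Applying the hypothesis LSI$(\varrho_N)$ to $\mu(\cdot\mid \bar x_N)$ bounds the first term by $\tfrac{1}{2\varrho_N}\int \frac{|\nabla_N f|^2}{f}\,d\mu$.

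Second, I would verify that $\bar\mu$ itself satisfies the hypotheses of the theorem with a reduced interaction matrix so that the induction closes. The covariance formula for the Hessian of an integrated log-density gives, for $i,j<N$,
\begin{equation*}
  \nabla_i\nabla_j \bar H(\bar x_N) \;=\; \E_{\mu(\cdot\mid\bar x_N)}\bigl[\nabla_i\nabla_j H\bigr] \;-\; \cov_{\mu(\cdot\mid\bar x_N)}\bigl(\nabla_i H,\nabla_j H\bigr).
\end{equation*}
Since LSI$(\varrho_N)$ implies a PI with the same constant, the covariance is bounded via Cauchy--Schwarz and the cross-derivative estimate $|\nabla_i\nabla_N H|\leq \kappa_{iN}$ by $\kappa_{iN}\kappa_{jN}/\varrho_N$. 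Thus $|\nabla_i\nabla_j\bar H|\leq \kappa_{ij} + \kappa_{iN}\kappa_{jN}/\varrho_N$, which is precisely the Schur-complement modification: the new matrix is $\bar A = A_{<N,<N} - A_{<N,N}A_{N,<N}/A_{NN}$, and $A\geq \varrho\Id_N$ forces $\bar A\geq \varrho\Id_{N-1}$. Combined with the Fisher information bound in the $x_N$ direction, the inductive hypothesis applied to $\bar\mu$ yields LSI$(\varrho)$ for $\mu$.

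The main obstacle is ensuring that the single-site conditional LSI constants transfer correctly under marginalization. Namely, the single-site conditional $\bar\mu(dx_i\mid \bar{\bar x}_i)$---first marginalizing $x_N$, then conditioning on the remaining coordinates except $x_i$---is not literally the same as $\mu(dx_i\mid \bar x_i)$, so one cannot apply the given LSI$(\varrho_i)$ directly to the marginal. Reconciling these (either by a Holley--Stroock-type perturbation argument using the same covariance estimate as above to control the Radon--Nikodym derivative between the two conditionals, or by organising the whole inductive bookkeeping so that the conditional LSI hypotheses are only ever invoked on the original measure $\mu$ and not on its marginals) is where the quantitative condition $A\geq \varrho\Id$---stronger than mere diagonal dominance---enters crucially. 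Once this bookkeeping is in place, the Fisher information contributions accumulated over the $N$ steps assemble into the full LSI$(\varrho)$ for $\mu$.
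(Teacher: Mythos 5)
You should first note that the paper contains no proof of Theorem~\ref{p_otto_reznikoff}: it is quoted from \cite[Theorem~1]{OR07} and used as a black box in the proof of Lemma~\ref{p_crucial_lemma _2_OR}, so your attempt has to be measured against the original Otto--Reznikoff argument, which is indeed a hierarchical induction of the kind you outline. Several of your ingredients are correct: the additivity of entropy under conditioning on $x_N$, the identity $\nabla_i\nabla_j\bar H=\E\bigl[\nabla_i\nabla_j H\bigr]-\cov\bigl(\nabla_i H,\nabla_j H\bigr)$ (expectation and covariance with respect to $\mu(dx_N|\bar x_N)$), the resulting bound $\kappa_{ij}+\kappa_{iN}\kappa_{jN}/\varrho_N$ via the PI of the conditional measure, and the fact that $A\ge\varrho\Id$ is inherited by the Schur complement.

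However, the two places where the analytic work of the theorem actually lies are missing. First, the gap you flag yourself is not bookkeeping: to close the induction with $\bar A$ equal to the Schur complement you must show that the single-site conditionals of $\bar\mu$ satisfy the LSI with constants at least $\varrho_i-\kappa_{iN}^2/\varrho_N$, and neither remedy you mention delivers this. A Holley--Stroock comparison between $\bar\mu(dx_i|\cdot)$ and $\mu(dx_i|\bar x_i)$ fails because the oscillation of the difference of the two single-site Hamiltonians is in general unbounded (already in the Gaussian case it grows quadratically in $x_i$); and the natural alternative --- viewing $\bar\mu(dx_i|\cdot)$ as the $x_i$-marginal of the two-site conditional $\mu(dx_i\,dx_N|\cdot)$ and invoking the two-site case --- only yields the smallest eigenvalue of the $2\times2$ matrix with diagonal $(\varrho_i,\varrho_N)$ and off-diagonal $-\kappa_{iN}$, which is in general strictly smaller than the Schur-complement diagonal, so the induction as you set it up does not close. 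Second, and unacknowledged: applying the inductive hypothesis to $\bar\mu$ controls $\operatorname{Ent}_{\bar\mu}(\bar f_N)$ by $\int|\bar\nabla\bar f_N|^2/\bar f_N\,d\bar\mu$, not by the Fisher information of $f$ under $\mu$. Since
\begin{equation*}
\nabla_i\bar f_N=\int\nabla_i f\,\mu(dx_N|\bar x_N)-\cov_{\mu(dx_N|\bar x_N)}\bigl(f,\nabla_i H\bigr),
\end{equation*}
one must estimate this covariance --- this is where the LSI (not merely the PI) of the conditional measure enters, through an entropy--covariance inequality --- and the resulting extra $\kappa_{iN}$-contributions have to be combined with those already charged to $\nabla_i\nabla_j\bar H$ in a precisely calibrated way. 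That calibration, carried out in \cite{OR07} and in the two-scale criterion of \cite{GORV}, is the heart of the proof; your sketch replaces it with the assertion that the contributions ``assemble'', which they do not without it, and with a naive accounting the constant degrades below the claimed $\varrho$. As written, the proposal is a reasonable skeleton but not yet a proof of the statement with the stated constant.
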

\begin{proof}[Proof of Lemma~\ref{p_crucial_lemma _2_OR}]
  We want to apply Theorem~\ref{p_otto_reznikoff}. By an application of Lemma~\ref{p_crucial_lemma _1_OR}, we know that the single-site measures conditional measures $\bar \mu (dx_i |x^{\Lambda_K} x^{ S \backslash,  \Lambda_K})$, $i \in \Lambda_k$ satisfy a LSI with uniform constant $\varrho>0$. \newline
For the mixed derivatives of the Hamiltonian, we have according to Lemma~\ref{p_aux_lemma_2_OR}
\begin{align*}
  \left|\frac{d}{dx_i} \frac{d}{dx_j}  \bar H \right| \leq C \frac{1}{|i-j|^{d+\bar \varepsilon}+1}.
\end{align*} 
Hence, in order to apply Theorem 1 we have to consider the symmetric matrix $A = (A_{ij})_{i,j \in \Lambda_K}$ with
  \begin{align*}
      & A_{ii}= \varrho, \\
      & A_{ij} = - C \frac{1}{|i-j|^{d+\bar \varepsilon} +1}, \qquad \mbox{for } i\neq j.
  \end{align*}
We will argue that A is strict positive-definite if we choose the integer $K$ large enough. 
We have
\begin{align*}
  \sum_{i,j \in \Lambda_K} x_i A_{ij} x_j = \sum_{i \in \Lambda_K} \varrho x_i^2 + \sum_{i,j \in \Lambda_K, \ i \neq j} x_i A_{ij} x_j.
\end{align*}
Let us estimate the second term of the right hand side. We have
\begin{align*}
 | \sum_{i,j \in \Lambda_K, \ i \neq j} x_i A_{ij} x_j | & \leq \frac{1}{2} \sum_{i \in \Lambda_K} \sum_{j \in \Lambda_K, \ i \neq j} |A_{ij}| x_i^2 + \frac{1}{2} \sum_{j \in \Lambda_K} \sum_{i \in \Lambda_K, \ i \neq j} |A_{ij}| x_j^2\\
& \leq C \sum_{i \in \Lambda_K} \sum_{j \in \Lambda_K, \ i \neq j} \frac{1}{|i-j|^{d + \bar \varepsilon}} x_i^2 \\
 & \leq \frac{C}{K^{\frac{\bar \varepsilon}{2}}} \sum_{i \in \Lambda_K} x_i^2 \sum_{j \in \Lambda_K, \ i \neq j} \frac{1}{|i-j|^{d + \frac{\bar \varepsilon}{2}}} 
\end{align*}
where the last inequality holds if we choose $K$ large enough. So we get overall that 
\begin{align*}
  \sum_{i,j \in \Lambda_K} x_i A_{ij} x_j \geq \frac{\varrho}{2} \sum_{i \in \Lambda_K} x_i^2 >0 ,
\end{align*}
 which yields the desired statement of Lemma~\ref{p_crucial_lemma _2_OR} by an application of Theorem~\ref{p_otto_reznikoff}.

\end{proof}

\appendix

\section{Uniqueness of the infinite-volume Gibbs measure: proof of Theorem~\ref{p_unique_Gibbs}}\label{s_decay_and_uniqueness}

The proof of Theorem~\ref{p_unique_Gibbs} is straightforward and only needs four ingredients:
\begin{itemize}
\item A sufficient decay of interactions (cf.~\eqref{e_decay_inter_Otto}).
\item A sufficient decay of correlations (cf.~\eqref{e_decay_corr_Otto}).
\item The uniform PI for the finite-volume Gibbs measures $\mu_{\Lambda}$. This is provided by Theorem~\ref{p_mr_OR} and the fact that the LSI yields a PI with the same constant.
\item The fact that the variances of the infinite-volume Gibbs measure $\mu$ are uniformly bounded (cf.~\eqref{e_sup_moment}).
\end{itemize}

\begin{proof}[Proof of Theorem~\ref{p_unique_Gibbs}]
  Let us assume that there are two infinite-volume Gibbs measures $\mu$ and $\tilde \mu$. It suffices to show that for a function $f$ with bounded support 
\begin{equation}\label{e_desired_statement}
 \left| \int f \mu - \int f \tilde \mu  \right| = 0.
\end{equation}
Let $B_R$ denote a ball with radius $R$ and center in the root of the lattice $\mathbb{Z}^d$. We decompose the measures $\mu$ and $\tilde \mu$ w.r.t.~$B_R$ into 
\begin{align*}
&\mu(dx^{B_R},d\omega^{\mathds{Z}^d \backslash B_R}) =  \mu(dx^{B_R}| \omega^{\mathds{Z}^d \backslash B_R}) \bar \mu (d \omega^{\mathds{Z}^d \backslash B_R})  \qquad \mbox{and} \\
& \tilde \mu(dx^{B_R},d \tilde \omega^{\mathds{Z}^d \backslash B_R}) =  \tilde \mu(dx^{B_R}  | \tilde \omega^{\mathds{Z}^d \backslash B_R}) \tilde{\bar \mu} (d \tilde \omega^{\mathds{Z}^d \backslash B_R}),
\end{align*}
where $\mu(dx^B_R| \omega^{\mathds{Z}^d \backslash B_R} )$ and $\tilde \mu(dx^B_R |\tilde \omega^{\mathds{Z}^d \backslash B_R})$ denote the conditional measures and $\bar \mu (d \omega^{\mathds{Z}^d \backslash B_R})$ and $\tilde{\bar \mu} (d \tilde \omega^{\mathds{Z}^d \backslash B_R})$ denote the corresponding marginals. For convenience we will write $x$ and $\omega$ instead of $x^{B_R}$ and $\omega^{\mathds{Z}^d \backslash B_R}$. \smallskip

From the DLR-equations it follows that $$\mu(dx|\omega) = \tilde \mu(dx| \omega)= \mu_\Lambda(dx|\omega),$$
where $\mu_\Lambda(dx)$ denotes the finite-volume Gibbs measure associated to the tempered state $\omega$ given by~\eqref{e_d_Gibbs_measure}. Hence, we get
\begin{align*}
  \left| \int f \mu - \int f \tilde \mu  \right| & =  \left| \int \int f \mu(dx|\omega) \bar \mu (d \omega) - \int \int f \mu(dx| \tilde \omega) \bar{\tilde\mu} (d\tilde \omega)  \right| \\
  & = \left| \int \int \left(\int  f \mu_\Lambda(dx|\omega)  - \int f \mu_\Lambda(dx| \tilde \omega)\right) \bar \mu (d \omega)  \bar{\tilde\mu} (d\tilde \omega)  \right| \\
 & \leq  \int \int \left| \left(\int  f \mu(dx|\omega)  - \int f \mu(dx| \tilde \omega)\right) \right| \bar \mu (d \omega)  \bar{\tilde\mu} (d\tilde \omega)   .
\end{align*}
The statement of Theorem~\ref{p_unique_Gibbs} follows, once we have shown that
\begin{align}\label{e_decay_dist_supp}
& \int \int  \left| \left(\int  f \mu(dx|\omega)  - \int f \mu(dx| \tilde \omega)\right) \right| \bar \mu (d \omega) \bar{\tilde\mu} (d\tilde \omega) \\
&  \qquad  \leq C(f) \ \frac{1}{|\dist ( \supp f , \mathds{Z}^d \backslash B_R)|^{\varepsilon}} \notag
\end{align}
for some $\varepsilon >0$. The reason is that by choosing the size of the ball $R \to\infty $ the estimate~\eqref{e_decay_dist_supp} yields 
\begin{align*}
  \left| \int f \mu - \int f \tilde \mu \right| \to 0.
\end{align*}
\medskip

Let us verify~\eqref{e_decay_dist_supp}.  We define for $t \in [0,1]$ the measures $\mu (dx | t \omega + (1-t) \tilde \omega)$ that interpolate between $\mu(dx| \tilde \omega)$ and $\mu(dx| \omega)$. Hence we get
\begin{align*}
  \int f &\mu (dx | \omega) -   \int f \mu (dx | \tilde \omega)  = \int_0^1 \frac{d}{dt}  \mu (dx | t \omega + (1-t) \tilde \omega) \\
  & =\int_0^1 \cov_{\mu(dx | t \omega + (1-t) \tilde \omega)} \left( f, \sum_{i \in B_R, \ j \notin B_R}  M_{ij} x_i (\tilde \omega - \omega) \right) .
\end{align*}
So, we have to estimate the covariance term on the right hand side of the last equation. For convenience, we write 
\begin{align*}
   &  x= (x_i) , \qquad \mbox{for } i \in \supp (f), \\
   & z = (z_l) , \qquad \mbox{for } l \in B_R \backslash \supp (f),  \\
   & \omega= (\omega_j) , \qquad \mbox{for } j \notin B_R \\
   & \omega_t = t \omega + (1-t) \tilde \omega , \quad \mbox{and} \\
   & \Delta \omega = \tilde \omega - \omega.
\end{align*}
With this notation, we decompose the measure $\mu(dx dz | \omega_t)$ according to
\begin{align*}
  \mu (dx dz |  \omega_t ) =    \mu (dz |  \omega_t )   \bar \mu (dx |  \omega_t ) .
\end{align*}
This decomposition of $\mu (dx dz |  \omega_t )$ yields the following decomposition of the covariance, namely
\begin{align*}
&  \cov_{\mu(dx dz | \omega_t)} \left( f, \sum_{i \in \supp (f), j \notin B_R} M_{ij} x_i \Delta \omega_j +  \sum_{l \in B_R \backslash \supp (f), j \notin B_R} M_{ij} z_l \Delta\omega_j \right) \\
  & \qquad =  \underbrace{\cov_{\bar \mu(dx| \omega_t)}  \left( f, \sum_{i , j } M_{ij} x_i \Delta \omega_j \right)}_{=: T_1} \\
  & \qquad \qquad + \underbrace{ \cov_{ \bar \mu(dx|  \omega_t)}  \left( f,  \int \sum_{l , j } M_{ij} z_l \Delta \omega_j  \mu (dz | x, \omega_t) \right) }_{=: T_2}.
\end{align*}
For, convenience here and from now on, the indexes $i$, $l$, and $j$ are always belonging to the set
\begin{align*}
  i \in \supp (f) \qquad l \in B_R \backslash \supp (f), \quad \mbox{and} \quad j \in B_R.
\end{align*}
We start with the estimation of the term $T_1$. We get
\begin{align*}
  T_1 \leq \sum_{i , j } |M_{ij}| \ |\cov_{\bar \mu(dx| \omega_t)}  \left( f,  x_i \Delta \omega_j \right)|.
\end{align*}
Because the measure $\mu(dx dz| \omega_t)$ satisfies a uniform LSI by Theorem~\ref{p_mr_OR}, it also satisfies a uniform PI (see Definition~\ref{d_SG}). Then also the marginal $\bar \mu(dz| \omega_t)$ satisfies a uniform PI. Hence we can continue the estimation of $T_1$ according to
\begin{align*}
  T_1 & \leq \sum_{i , j } |M_{ij}| \ |\tilde \omega_j - \omega_j| \ \left( \var_{\bar \mu(dx| \omega_t)}  ( f ) \right)^{\frac{1}{2}} \  \left( \var_{\bar \mu(dx| \omega_t)}  (  x_i  ) \right)^{\frac{1}{2}} \\
& \overset{\PI}{\leq} C(f) \sum_{i , j } |M_{ij}| \ | \tilde \omega_j - \omega_j|  \\
& \leq C(f) \sum_{i , j } \frac{1}{|i-j|^{d + \alpha} +1} \ | \tilde \omega_j - \omega_j| ,
\end{align*}
where we used the decay of interaction~\eqref{e_cond_inter_alg_decay_OR}. Because $i \in \supp (f)$ and $j \notin B_R $ we get
\begin{align*}
  |i-j| \geq \dist \left( \supp (f) , B_R\right).
\end{align*}
Hence, we continue the estimation of $T_1$ as
\begin{align*}
  T_1 & \leq \frac{C(f)}{|\dist \left( \supp (f) , B_R\right)|^{\frac{\alpha}{2}}} \sum_{i , j } \frac{1}{|i-j|^{d + \frac{\alpha}{2}} +1} \ | \tilde \omega_j - \omega_j| \\
& \leq \frac{C(f)}{|\dist \left( \supp (f) , B_R\right)|^{\frac{\alpha}{2}}} \ |\supp(f)| \ \sum_{ j} \frac{1}{|j|^{d + \frac{\alpha}{2}} +1} \ | \tilde \omega_j - \omega_j| .
\end{align*}
Using the fact that 
\begin{align*}
  \sum_{ j} \frac{1}{|j|^{d + \frac{\alpha}{2}} +1} \leq C < \infty
\end{align*}
we get the final form of the estimation of the term $T_1$, namely
\begin{align*}
  T_1 & \leq \frac{C(f)}{|\dist \left( \supp (f) , B_R\right)|^{\frac{\alpha}{2}}} \ \left( \sum_{ j} \frac{1}{|j|^{d + \frac{\alpha}{2}} +1} \ | \tilde \omega_j - \omega_j|^2 \right)^{\frac{1}{2}} .
\end{align*}

Let us turn to the estimation of $T_2$. We have 
\begin{align*}
    T_2 \leq  \left(  \var_{\bar \mu(dx|  \omega_t)} (f) \right)^{\frac{1}{2}} \ \left(  \var_{\bar \mu(dx|  \omega_t)} \left( \int \sum_{l , j } M_{ij} z_l \Delta \omega_j  \mu (dz | x, \omega_t) \right) \right)^{\frac{1}{2}}  .
\end{align*}
Now, an application of the PI yields
\begin{align*}
    T_2 \leq  C(f)  \left( \int \sum_i \left( \frac{d}{dx_i}\int \sum_{l , j } M_{ij} z_l \Delta \omega_j  \mu (dz | x, \omega_t) \right)^2 \bar \mu(dx|  \omega_t)  \right)^{\frac{1}{2}}.
\end{align*}
Straightforward calculation yields
\begin{align*}
\frac{d}{dx_i} & \int \sum_{l , j } M_{ij} z_l \Delta \omega_j  \mu (dz | x, \omega_t) \\
&  = - \cov_{\mu (dz | x, \omega_t)} \left( \sum_{l , j } M_{ij} z_l \Delta \omega_j , \sum_{i,\tilde l} M_{i \tilde l} z_{\tilde l}  \right) \\
&  = - \sum_{l , j } \sum_{i,\tilde l} M_{ij} M_{i \tilde l} \Delta \omega_j \cov_{\mu (dz | x, \omega_t)} \left( z_l , z_{\tilde l}  \right) .
\end{align*}
Using the decay of interaction~\eqref{e_cond_inter_alg_decay_OR} and the decay of correlations~\eqref{e_decay_corr_Otto} we get the estimate
\begin{align*}
&\left| \frac{d}{dx_i} \int \sum_{l , j } M_{ij} z_l \Delta \omega_j  \mu (dz | x, \omega_t) \right| \\
& \qquad   =  \sum_{l , j } \sum_{\tilde l} |M_{ij}| \  |M_{i \tilde l}| \ |\cov_{\mu (dz | x, \omega_t)} \left( z_l , z_{\tilde l}  \right) | \ |\Delta \omega_j| \\
&  \qquad \leq C  \sum_{l , j,  \tilde l} \frac{1}{|i-j|^{d + \alpha}+1} \ \frac{1}{|i-\tilde l|^{d + \alpha}} \  \frac{1}{|l - \tilde l|^{d + \alpha}+1} \ |\Delta \omega_j|.
\end{align*}
Because $i \in \supp (f)$ and $j \notin B_R$ we have
\begin{align*}
  |i-j| \geq \dist \left( \supp (f) , B_R\right).
\end{align*}
This yields the estimate
\begin{align*}
&\left| \frac{d}{dx_i} \int \sum_{l , j } M_{ij} z_l \Delta \omega_j  \mu (dz | x, \omega_t) \right| \\
&  \qquad \leq \frac{C}{| \dist \left( \supp (f) , B_R\right)|^{\frac{\alpha}{2}}} \\
& \qquad \qquad 
\times  \sum_{l , j,  \tilde l} \frac{1}{|i-j|^{d + \frac{\alpha}{2}}+1} \ \frac{1}{|i-\tilde l|^{d + \alpha}} \  \frac{1}{|l - \tilde l|^{d + \alpha}+1} \ |\Delta \omega_j| \\
&  \qquad \leq \frac{C}{| \dist \left( \supp (f) , B_R\right)|^{\frac{\alpha}{2}}}  \  \sum_{  j} \frac{1}{|i-j|^{d + \frac{\alpha}{2}}+1} \ |\Delta \omega_j|.
\end{align*}
Plugging this to the estimation of $T_2$ we get
\begin{align*}
    T_2 &\leq  C(f) \ \frac{C}{| \dist \left( \supp (f) , B_R\right)|^{\frac{\alpha}{2}}} \ \\
  & \qquad \times  \left( \int \sum_i \left(  \sum_{  j} \frac{1}{|i-j|^{d + \frac{\alpha}{2}}+1} \ |\Delta \omega_j| \right)^2 \bar \mu(dx|  \omega_t)  \right)^{\frac{1}{2}} \\
 &\leq  C(f) \ \frac{|\supp(f)|}{| \dist \left( \supp (f) , B_R\right)|^{\frac{\alpha}{2}}} \ \\
  & \qquad \times  \left( \int \left(  \sum_{  j} \frac{1}{|j|^{d + \frac{\alpha}{2}}+1} \ |\Delta \omega_j| \right)^2 \bar \mu(dx|  \omega_t)  \right)^{\frac{1}{2}} .
\end{align*}
We use the fact  
\begin{align*}
  \sum_{  j} \frac{1}{|j|^{d + \frac{\alpha}{2}}+1} \leq C < \infty
\end{align*}
and get
\begin{align*}
      T_2  &\leq C \ C(f) \ \frac{|\supp(f)|}{| \dist \left( \supp (f) , B_R\right)|^{\frac{\alpha}{2}}} \ \\
  & \qquad \times  \left( \int   \sum_{  j} \frac{1}{|j|^{d + \frac{\alpha}{2}}+1} \ |\Delta \omega_j|^2  \bar \mu(dx|  \omega_t)  \right)^{\frac{1}{2}} \\
& =  \frac{C(f)}{| \dist \left( \supp (f) , B_R\right)|^{\frac{\alpha}{2}}}  \left(\sum_{  j} \frac{1}{|j|^{d + \frac{\alpha}{2}}+1} \ |\Delta \omega_j|^2   \right)^{\frac{1}{2}}  .  
\end{align*}
Overall, using the estimation of $T_1$ and $T_2$ this yields the estimate
\begin{align*}
& \left|   \int f \mu (dx | \omega) -   \int f \mu (dx | \tilde \omega) \right| \\ 
& \quad \leq \frac{C(f)}{| \dist \left( \supp (f) , B_R\right)|^{\frac{\alpha}{2}}}   \left(   \sum_{  j} \frac{1}{|j|^{d + \frac{\alpha}{2}}+1} \ |\tilde \omega_j - \omega_j|^2    \right)^{\frac{1}{2}} .
\end{align*}
Now, we get
\begin{align*}
 \int \int &  \left|   \int f \mu (dx | \omega) -   \int f \mu (dx | \tilde \omega) \right|  \bar \mu (d \omega) \bar{\tilde\mu} (d\tilde \omega)  \leq \frac{C(f)}{| \dist \left( \supp (f) , B_R\right)|^{\frac{\alpha}{2}}} \\
& \qquad \times \int \int   \left(   \sum_{  j} \frac{1}{|j|^{d + \frac{\alpha}{2}}+1} \ |\tilde \omega_j - \omega_j|^2    \right)^{\frac{1}{2}}  \bar \mu (d \omega) \bar{\tilde\mu} (d\tilde \omega) \\
& \leq \frac{C(f)}{| \dist \left( \supp (f) , B_R\right)|^{\frac{\alpha}{2}}} \\
& \qquad \times \left(  \sum_{  j} \frac{1}{|j|^{d + \frac{\alpha}{2}}+1}  \left( \int \omega_j^2 \bar \mu (d \omega) + \int \tilde \omega_j^2 \bar{\tilde{ \mu }}(d \tilde \omega) \right) \right)^{\frac{1}{2}} \\
& \leq \frac{C(f)}{| \dist \left( \supp (f) , B_R\right)|^{\frac{\alpha}{2}}},
\end{align*}
where we used in the last step that
\begin{align*}
   \int \omega_j^2 \bar \mu (d \omega) + \int \tilde \omega_j^2 \bar{\tilde{ \mu }} (d \tilde \omega) =  \int \omega_j^2 \mu (d \omega) + \int \tilde \omega_j^2 \tilde{ \mu }(d \tilde \omega) \overset{\eqref{e_sup_moment}}{\leq} C.
\end{align*}

So, we have deduced the desired estimate~\eqref{e_decay_dist_supp}, which closes the argument.
\end{proof}

\section{The criterion of Bakry-\'Emery and the Holley-Stroock perturbation principle}\label{s_BE_HS}

There are a lot of standard criteria to deduce the LSI. The most important criterion is thee \emph{Bakry-\'Emery criterion} connects convexity of the Hamiltonian to the validity of the~PI and the~LSI.
\begin{theorem}[Bakry-\'Emery criterion {\cite[Proposition 3, Corollaire 2]{BE85}}]\label{local:thm:BakryEmery}
 Let $H: D \to \mathbb{R}$ be a Hamiltonian with Gibbs measure $$\mu(dx)=Z_\mu^{-1} \exp\left( -\varepsilon^{-1} H(x) \right) \ dx$$ on a convex domain $D$ and assume that $\nabla^2 H(x) \geq \lambda >0$ for all $x \in \mathbb{R}^n$. Then $\mu$ satisfies LSI with constant $\varrho$ satisfying
 \begin{equation}
  \varrho \geq \frac{\lambda}{\eps} . 
 \end{equation}
\end{theorem}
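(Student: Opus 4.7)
The plan is to prove the Bakry--\'Emery criterion by the classical $\Gamma_2$-calculus route. Set $V := \varepsilon^{-1} H$ so that $\mu = Z_\mu^{-1} e^{-V}\, dx$ with $\nabla^2 V \ge (\lambda/\varepsilon)\, \Id$. The natural overdamped Langevin generator
\begin{equation*}
  L f = \Delta f - \nabla V \cdot \nabla f
\end{equation*}
is symmetric on $L^2(\mu)$ (with reflecting conditions on $\partial D$ if $D \neq \R^n$), and its associated heat semigroup $P_t = e^{tL}$ is the natural candidate for decreasing entropy to $\mu$. Introduce the carr\'e du champ $\Gamma(f) := |\nabla f|^2$ and its iterate $\Gamma_2(f) := \tfrac12 L \Gamma(f) - \Gamma(f, Lf)$.

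The first step is the Bochner identity
\begin{equation*}
  \Gamma_2(f) = \|\nabla^2 f\|_{\mathrm{HS}}^2 + \langle \nabla f,\, \nabla^2 V\, \nabla f \rangle \;\ge\; \frac{\lambda}{\varepsilon}\, |\nabla f|^2,
\end{equation*}
obtained by a direct computation combined with the hypothesis $\nabla^2 H \ge \lambda\, \Id$. This is precisely the curvature-dimension condition $\mathrm{CD}(\varrho, \infty)$ with $\varrho := \lambda/\varepsilon$.

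The second step converts this infinitesimal bound into the global LSI. Applying $\Gamma_2 \ge \varrho\, \Gamma$ to $\log P_{t-s} f$ and differentiating the interpolation $s \mapsto P_s\bigl((P_{t-s} f) \log P_{t-s} f\bigr)$ yields, after integration in $s \in [0,t]$, exponential decay of the Fisher information:
\begin{equation*}
  I_\mu(P_t f) \le e^{-2\varrho t}\, I_\mu(f), \qquad I_\mu(g) := \int \frac{|\nabla g|^2}{g}\, d\mu.
\end{equation*}
Combining this with de~Bruijn's identity $\tfrac{d}{dt} \mathrm{Ent}_\mu(P_t f) = -I_\mu(P_t f)$ and integrating from $0$ to $\infty$ (using $\mathrm{Ent}_\mu(P_t f) \to 0$ as $t \to \infty$) gives
\begin{equation*}
  \mathrm{Ent}_\mu(f) = \int_0^\infty I_\mu(P_t f)\, dt \le I_\mu(f) \int_0^\infty e^{-2\varrho t}\, dt = \frac{1}{2\varrho}\, I_\mu(f),
\end{equation*}
which is exactly LSI with constant $\varrho = \lambda/\varepsilon$.

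The main technical obstacle is regularity and boundary terms: the manipulations with $\log P_t f$ formally require $f>0$ and enough smoothness to differentiate under the integral, which one handles by approximating $f$ by smooth strictly positive functions and passing to the limit in both entropy and Fisher information. If $D$ has a boundary, the Bochner formula produces a boundary contribution that must carry the right sign, and this is precisely where convexity of $D$ enters, through the second fundamental form of $\partial D$; interpreting $L$ with reflecting boundary conditions makes the relevant integration by parts valid. Both issues are standard and can be invoked from \cite{BE85} or \cite{Roy07}.
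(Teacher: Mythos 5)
Your proposal is correct: the paper does not prove this criterion but simply cites Bakry--\'Emery, and your $\Gamma_2$/semigroup argument (Bochner identity giving $\mathrm{CD}(\lambda/\varepsilon,\infty)$, exponential decay of Fisher information, de Bruijn's identity, and integration in time) is precisely the classical proof invoked there, with the convexity of $D$ correctly identified as what gives the right sign of the boundary term for the reflecting semigroup. Nothing further is needed.
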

In non-convex cases the standard tool to deduce the LSI is the \emph{Holley-Stroock perturbation principle}.
\begin{theorem}[Holley-Stroock perturbation principle {\cite[p. 1184]{HS}}]\label{local:thm:HolleyStroock}
  Let $H$ be a Hamiltonian with Gibbs measure $$\mu(dx)=Z_\mu^{-1} \exp\left( -\eps^{-1} H(x) \right) \ dx.$$ Further, let $\tilde H$ denote a bounded perturbation of $H$ and let $\tilde \mu_\varepsilon$ denote the Gibbs measure associated to the Hamiltonian $\tilde H$.
 If $\mu$ satisfies LSI with constant $\varrho$ then also $\tilde\mu$ satisfies the LSI with constant$\tilde \varrho$, where the constants satisfies the bound
 \begin{equation}\label{local:e:HolleyStroockPI-LSI}
\tilde \varrho \geq \exp \left( -\varepsilon^{-1} \osc (H- \tilde H)\right) \varrho,
 \end{equation}
 where $\osc (H - \tilde H) := \sup (H - \tilde H) - \inf (H - \tilde H)$.
\end{theorem}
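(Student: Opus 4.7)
The plan is to transport the LSI from $\mu$ to $\tilde\mu$ by exploiting that the two measures are mutually absolutely continuous with a density whose oscillation is controlled by $\osc(H - \tilde H)$. Setting $\rho := d\tilde\mu/d\mu = (Z_\mu / Z_{\tilde\mu})\exp(\eps^{-1}(H - \tilde H))$, the (unknown) ratio of normalization constants cancels when one forms $\sup_x \rho(x) / \inf_x \rho(x)$, which equals exactly $\exp(\eps^{-1}\osc(H - \tilde H))$. I abbreviate $a := \inf \rho$ and $b := \sup \rho$, so that $b/a = \exp(\eps^{-1}\osc(H - \tilde H))$; the rest of the proof is a bookkeeping exercise that keeps track of the factor $b/a$.

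First I would bound the left-hand side of the LSI for $\tilde\mu$ by the left-hand side of the LSI for $\mu$. The key tool is the pointwise inequality $\Phi_t(s) := s\log(s/t) - s + t \geq 0$, valid for all $s, t \geq 0$, together with the elementary variational identity
\begin{equation*}
\int f\log f\, d\tilde\mu \;-\; \int f\,d\tilde\mu \,\log\!\int f\,d\tilde\mu \;=\; \inf_{t > 0}\int \Phi_t(f)\,d\tilde\mu.
\end{equation*}
Choosing the suboptimal value $t = \int f\,d\mu$ on the right-hand side and applying the bound $\int \Phi_t(f)\,d\tilde\mu \leq b\int \Phi_t(f)\,d\mu$ (which is legal precisely because $\Phi_t(f) \geq 0$), I obtain
\begin{equation*}
\int f\log f\,d\tilde\mu \;-\; \int f\,d\tilde\mu \,\log\!\int f\,d\tilde\mu \;\leq\; b \left( \int f\log f\,d\mu - \int f\,d\mu \,\log\!\int f\,d\mu \right).
\end{equation*}

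Second, I compare the Dirichlet forms in the opposite direction. Since $d\mu = \rho^{-1}\,d\tilde\mu$ and $\rho \geq a > 0$, the non-negative integrand $|\nabla f|^2/f$ satisfies
\begin{equation*}
\int \frac{|\nabla f|^2}{f}\, d\mu \;=\; \int \frac{|\nabla f|^2}{f}\,\rho^{-1}\, d\tilde\mu \;\leq\; \frac{1}{a}\int \frac{|\nabla f|^2}{f}\, d\tilde\mu.
\end{equation*}
Chaining these two one-sided estimates through the assumed LSI for $\mu$ (Definition~\ref{d_LSI}) then yields the LSI for $\tilde\mu$ with constant $\tilde\varrho \geq (a/b)\varrho = \exp(-\eps^{-1}\osc(H - \tilde H))\,\varrho$, which is exactly~\eqref{local:e:HolleyStroockPI-LSI}.

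There is no substantial obstacle in this argument; the only points that need to be written down cleanly are the variational characterization of the entropy (equivalently the non-negativity of $\Phi_t$) and the two one-sided pointwise density estimates. No further properties of $H$ or $\tilde H$ beyond boundedness of the perturbation enter the proof, and the argument is essentially symmetric in $\mu$ and $\tilde\mu$, which is consistent with the symmetric appearance of $\osc(H - \tilde H)$ in the final bound.
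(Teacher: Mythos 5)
Your proof is correct and follows exactly the classical Holley--Stroock argument: the paper itself does not reprove this statement but only cites \cite{HS}, and your two one-sided comparisons (the entropy via the nonnegative integrand $\Phi_t(f)$ with the suboptimal choice $t=\int f\,d\mu$, and the Dirichlet form via $\rho^{-1}\le 1/a$) are precisely the standard route. The only cosmetic point is that $b/a$ need not \emph{equal} $\exp\left(\eps^{-1}\osc(H-\tilde H)\right)$ when the supremum or infimum of $H-\tilde H$ is not attained; but since $b/a\le\exp\left(\eps^{-1}\osc(H-\tilde H)\right)$ always holds, the inequality goes in the favorable direction and the stated bound \eqref{local:e:HolleyStroockPI-LSI} follows unchanged.
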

The perturbation principle of Holley-Stroock~\cite{HS} allows to deduce the LSI constants of non-convex Hamiltonians from the LSI of an appropriately convexified Hamiltonian. However due to its perturbative nature, the dependence of the LSI constant~$\tilde \varrho$ usually is bad in physical parameters like system size or temperature.

\begin{acknowledgement}
The author wants to thank Robin Nittka, Maria Westdickenberg (ne\'e Reznikoff), Felix Otto, Nobuo Yoshida and Chris Henderson for the fruitful and inspiring discussions on this topic. Additionally, the author wants to thank the \emph{Max-Planck Institute for Mathematics in the Sciences} in Leipzig for financial support during the years 2010 to 2012, where most of the content of this article originated.   
\end{acknowledgement}

\bibliographystyle{amsalpha}
\bibliography{or_revisited}

\end{document}